\numberwithin{equation}{section}
\theoremstyle{plain}
\newtheorem{theorem}[equation]{Theorem}
\newtheorem{corollary}[equation]{Corollary}
\newtheorem{lemma}[equation]{Lemma}
\newtheorem{proposition}[equation]{Proposition}
\theoremstyle{definition}
\newtheorem{definition}[equation]{Definition}
\newtheorem{remark}[equation]{Remark}
\numberwithin{equation}{section}
\newcommand{\R}{{\mathbb R}}
\newcommand{\N}{{\mathbb N}}
\newcommand{\Om}{\Omega}
\providecommand{\vint}[1]{\mathchoice
	{\mathop{\vrule width 5pt height 3 pt depth -2.5pt
			\kern -9pt \kern 1pt\intop}\nolimits_{\kern -5pt{#1}}}
	{\mathop{\vrule width 5pt height 3 pt depth -2.6pt
			\kern -6pt \intop}\nolimits_{\kern -3pt{#1}}}
	{\mathop{\vrule width 5pt height 3 pt depth -2.6pt
			\kern -6pt \intop}\nolimits_{\kern -3pt{#1}}}
	{\mathop{\vrule width 5pt height 3 pt depth -2.6pt
			\kern -6pt \intop}\nolimits_{\kern -3pt{#1}}}}
\newcommand{\eps}{\varepsilon}
\newcommand{\loc}{\mathrm{loc}}
\newcommand{\BV}{\mathrm{BV}}
\newcommand{\SBV}{\mathrm{SBV}}
\newcommand{\ch}{\text{\raise 1.3pt \hbox{$\chi$}\kern-0.2pt}}
\newcommand{\mres}{\mathbin{\vrule height 2ex depth 2.2pt width
		0.12ex\vrule height -0.3ex depth 2.2pt width .5ex}}
\DeclareMathOperator{\dive}{div}
\DeclareMathOperator{\dist}{dist}
\DeclareMathOperator{\Var}{Var}
\DeclareMathOperator{\BMO}{BMO}
\begin{document}
	\title{BMO-type functionals, total variation,\\
		and $\Gamma$-convergence
		\footnote{{\bf 2020 Mathematics Subject Classification}: 26B30, 49J45
			\hfill \break {\it Keywords\,}: Total variation, special function of bounded variation,
			nonlocal functional, $\Gamma$-convergence, bounded mean oscillation
	}}
	\author{Panu Lahti and Quoc-Hung Nguyen}

	\maketitle
	
	\begin{abstract}
		We study the BMO-type functional $\kappa_{\eps}(f,\R^n)$, which can be used to characterize
		BV functions $f\in\BV(\R^n)$.
		The $\Gamma$-limit of this functional, taken with respect to $L^1_{\loc}$-convergence, is
		known to be $\tfrac 14 |Df|(\R^n)$. We show that the $\Gamma$-limit with respect to
		$L^{\infty}_{\loc}$-convergence is
		\[
		\tfrac 14 |D^a f|(\R^n)+\tfrac 14 |D^c f|(\R^n)+\tfrac 12 |D^j f|(\R^n),
		\]
		which agrees with the ``pointwise'' limit in the case of SBV functions. 
	\end{abstract}
	
	\section{Introduction}
	
	In the past two decades,
	there has been widespread interest in characterizing Sobolev and BV (bounded variation)
	functions by means of non-local functionals;
	see e.g. \cite{BBM,Bre,BN16,Bre2,Dav}.
	One of these is the following BMO-type functional:
	given an open set $\Om\subset \R^n$ and a function $f\in L^1_{\loc}(\Om)$, one defines
	\[
	\kappa_{\eps}(f,\Om)
	:=\eps^{n-1}\sup_{\mathcal F_{\eps}}\sum_{Q\in \mathcal F_{\eps}}\vint{Q}\left|f(x)
	-\vint Q f(y)\,dy\right|
	\,dx,
	\]
	where $\mathcal F_{\eps}$ is a family of pairwise
	disjoint open cubes contained in $\Om$, with side length $\eps>0$ and arbitrary orientation.
	A variant of this was first studied by Bourgain--Brezis--Mironescu \cite{BBM15},
	and afterwards Ambrosio--Bourgain--Brezis--Figalli \cite{ABBF}
	used the functional $\kappa_{\eps}$ to characterize sets of finite perimeter.
	The functional $\kappa_{\eps}$ was studied in the context of more general BV functions,
	in particular special functions of bounded variation (SBV),
	by Ponce--Spector \cite{PoSp}, Fusco--Moscariello--Sbordone \cite{FMS16}, and
	De Philippis--Fusco--Pratelli \cite{DPFP}.
	We will give definitions in Section \ref{sec:definitions}.
	In particular, in \cite[Corollary 6.2]{DPFP} it was shown that
	for $f\in \SBV_{\loc}(\Om)$, we have
	\begin{equation}\label{eq:SBV limit}
		\lim_{\eps\to 0}\kappa_{\eps}(f,\Om)=\tfrac 14 |D^a f|(\Om)+\tfrac 12 |D^j f|(\Om).
	\end{equation}
	However, for a general BV function $f$,
	the limit $\lim_{\eps\to 0}\kappa_{\eps}(f,\Om)$ does not always exist.
	De Philippis--Fusco--Pratelli \cite[Example 6.3]{DPFP} give an example of
	$f\in\BV(\Om)$
	for which the variation measure $Df$ only consists of the Cantor part $D^c f$, with
	\[
	\liminf_{\eps\to 0}\kappa_{\eps}(f,\Om)=\kappa_-
	\quad\textrm{and}\quad \limsup_{\eps\to 0}\kappa_{\eps}(f,\Om)=\kappa_+,
	\]
	where the numbers $1/4<\kappa_-<\kappa_+<1/2$ can be arranged so that
	$\kappa_-$ is arbitrarily close to $1/4$ and $\kappa_+$ is arbitrarily close to $1/2$.
	
	In the study of nonlocal functionals, this type of defect is rather common, and one
	can often fix the issue by considering $\Gamma$-convergence instead.
	See e.g. \cite{BN18,BN16-2} for this type of situation in the context of different functionals.
	
	\begin{definition}
		Let $\Om\subset \R^n$ be open.
		Functionals $\Phi_{\eps}\colon L^1_{\loc}(\Om)\to [0,\infty]$
		are said to $\Gamma$-converge to a functional $\Phi$
		as $\eps\to 0$
		if the following two properties hold:
		\begin{enumerate}
			\item For every $f\in L_{\loc}^1(\Om)$ and for every family $\{f_\eps\}_{\eps>0}$ such that
			$f_\eps\to f$ in $L^{1}_{\loc}(\Om)$ as $\eps\to 0$, we have
			\[
			\liminf_{\eps\to 0}\Phi_{\eps}(f_{\eps})\ge \Phi(f).
			\]
			\item For every $f\in L_{\loc}^1(\Om)$, there exists a family $\{f_\eps\}_{\eps>0}$ such that
			$f_{\eps}\to f$ in $L_{\loc}^1(\Om)$ as $\eps\to 0$, and
			\[
			\limsup_{\eps\to 0}\Phi_{\eps}(f_{\eps})\le \Phi(f).
			\]
		\end{enumerate}
	\end{definition}
	
	Arroyo-Rabasa--Bonicatto--Del Nin \cite{ARBDN} show that for the
	functionals $\kappa(\cdot,\R^n)\colon L^1_{\loc}(\R^n)\to [0,\infty]$, the $\Gamma$-limit is
	\[
	\tfrac 14 |Df|(\R^n),\quad f\in L^1_{\loc}(\R^n).
	\]
	In this way, by considering $\Gamma$-convergence we find that a limit exists.
	But on the downside, in the case of SBV functions
	the limit does not agree
	with \eqref{eq:SBV limit}.
	For this reason, we consider $\Gamma$-convergence with respect to more general (stronger, to be precise) topologies.
	\begin{definition}
		Let $\Om\subset \R^n$ be open and let $1\le p\le \infty$.
		Functionals $\Phi_{\eps}\colon L^1_{\loc}(\Om)\to [0,\infty]$
		are said to $\Gamma_{p}$-converge to a functional $\Phi$
		as $\eps\to 0$
		if the following two properties hold:
		\begin{enumerate}
			\item For every $f\in L_{\loc}^1(\Om)$ and for every family $\{f_\eps\}_{\eps>0}$ such that $f_\eps- f\in L^{p}_{\loc}(\Om)$ and 
			$f_\eps- f\to 0$ in $L^{p}_{\loc}(\Om)$ as $\eps\to 0$, we have
			\[
			\liminf_{\eps\to 0}\Phi_{\eps}(f_{\eps})\ge \Phi(f).
			\]
			\item For every $f\in L_{\loc}^1(\Om)$, there exists a family $\{f_\eps\}_{\eps>0}$ such that
		 $f_\eps- f\in L^{p}_{\loc}(\Om)$ and 
		$f_\eps- f\to 0$ in $L^{p}_{\loc}(\Om)$ as $\eps\to 0$, and
			\[
			\limsup_{\eps\to 0}\Phi_{\eps}(f_{\eps})\le \Phi(f).
			\]
		\end{enumerate}
	\end{definition}
	
	Our main result is the following.
	
	\begin{theorem}\label{thm:main}
		Let $\Om\subset \R^n$ be an open set. Then for $1\le p<\infty$, the $\Gamma_p$-limit
		of the functionals $\kappa_{\eps}(\cdot,\Om)$ is
		\begin{equation}\label{eq:Gamma p limit}
			\tfrac 14 |D f|(\Om),\quad f\in L^1_{\loc}(\Om),
		\end{equation}
		whereas the $\Gamma_\infty$-limit is
		\begin{equation}\label{eq:Gamma infty limit}
			\tfrac 14 |D^a f|(\Om)+\tfrac 14 |D^c f|(\Om)+\tfrac 12 |D^j f|(\Om),\quad f\in L^1_{\loc}(\Om).
		\end{equation}
	\end{theorem}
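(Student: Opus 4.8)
The plan is to reduce everything to three known ingredients — the $L^1_{\loc}$ $\Gamma$-limit $\tfrac14|Df|$ of \cite{ARBDN} (used also over an arbitrary open set, the proof there being local in nature), the special case $\kappa_\eps(g,\Om')\to\tfrac14\int_{\Om'}|\nabla g|$ of \eqref{eq:SBV limit} for $g\in W^{1,1}_{\loc}$, and the elementary a priori bound $\kappa_\eps(g,\Om')\le\tfrac12|Dg|(\Om')$ coming from the sharp $L^1$-Poincar\'e inequality on a cube — plus one genuinely new lower bound near the jump set. We use that $\kappa_\eps$ is superadditive over disjoint open sets and ``almost'' subadditive over a finite open cover up to an error controlled by $|Df|$ on the overlap; and we may assume $f\in\BV(\Om)$, since otherwise \eqref{eq:Gamma p limit}--\eqref{eq:Gamma infty limit} are trivial (take $f_\eps=f$ for the recovery sequence; for the lower bound $|Df|(\Om)=\infty$ and $\kappa_\eps(f_\eps)\to\infty$ by \cite{ARBDN}). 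For $1\le p<\infty$ the liminf inequality is immediate, as $f_\eps-f\to0$ in $L^p_{\loc}$ forces $f_\eps\to f$ in $L^1_{\loc}$; for the recovery sequence take $f_\eps:=(f-f^{T_\eps})+f^{T_\eps}*\rho_{\delta_\eps}$ with $f^T$ the truncation at level $T$, so $f_\eps-f=f^{T_\eps}*\rho_{\delta_\eps}-f^{T_\eps}\in L^p_{\loc}$ tends to $0$ in $L^p_{\loc}$, while $\kappa_\eps(f_\eps,\Om)\le\kappa_\eps(f^{T_\eps}*\rho_{\delta_\eps},\Om)+\tfrac12|D(f-f^{T_\eps})|(\Om)$, the last term vanishing as $T_\eps\to\infty$ and the first tending to $\tfrac14\int_\Om|\nabla(f^{T_\eps}*\rho_{\delta_\eps})|\le\tfrac14|Df|(\Om)$; a diagonalization fixing $T_\eps\uparrow\infty$, $\delta_\eps\to0$, $\eps/\delta_\eps\to0$ concludes \eqref{eq:Gamma p limit}.

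The $\Gamma_\infty$-liminf inequality is obtained by localizing around $J_f$. Fix $\eta>0$ and, using outer regularity of the finite measure $|D^af|+|D^cf|$ (which does not charge the $\mathcal H^{n-1}$-finite, $\mathcal L^n$-null set $J_f$), choose an open $U\supset J_f$ with $(|D^af|+|D^cf|)(U)<\eta$; put $V:=\Om\setminus\overline U$. Since $L^\infty_{\loc}$-convergence implies $L^1_{\loc}$-convergence and $f$ has no jump part on $V$, \cite{ARBDN} gives $\liminf_\eps\kappa_\eps(f_\eps,V)\ge\tfrac14|Df|(V)\ge\tfrac14(|D^af|+|D^cf|)(\Om)-\tfrac\eta4$. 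On $U$ we prove $\liminf_\eps\kappa_\eps(f_\eps,U)\ge\tfrac12|D^jf|(\Om)$: for each small $\eps$ one constructs a pairwise disjoint family of $\eps$-cubes contained in $U$ covering a portion $J_f^\eps\subset J_f$ with $\mathcal H^{n-1}(J_f\setminus J_f^\eps)\to0$, each cube centered on $J_f$ with two faces essentially parallel to the approximate tangent plane there; because $f$ blows up in $L^1$ to the two-valued jump function of height $\theta_f:=|f^\vee-f^\wedge|$ at $\mathcal H^{n-1}$-a.e.\ point of $J_f$ and $\|f_\eps-f\|_{L^\infty}\to0$, the normalized restriction of $f_\eps$ to such a cube is $L^1$-close to that jump function cut through the middle, so $\vint{Q}|f_\eps-\vint{Q}f_\eps|\ge\tfrac12\theta_f-o(1)$; summing gives $\eps^{n-1}\sum_Q\ge\tfrac12\int_{J_f}\theta_f\,d\mathcal H^{n-1}-o(1)=\tfrac12|D^jf|(\Om)-o(1)$. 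Adding the two estimates by superadditivity and letting $\eta\to0$ gives \eqref{eq:Gamma infty limit} as a lower bound.

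For the $\Gamma_\infty$ recovery sequence, the key structural fact is that the classical discontinuity set $D_f$ of the precise representative of $f$ is $\sigma$-finite with respect to $\mathcal H^{n-1}$, hence $(|D^af|+|D^cf|)(D_f)=0$; by outer regularity pick an open $U\supset D_f$ with $(|D^af|+|D^cf|)(U)<\eta$ and $|Df|(\partial U)=0$, so that $f$ is continuous on $V:=\Om\setminus\overline U$. Set $f_\eps:=f$ on a neighborhood of $\overline U\cap\Om$, and $f_\eps:=f*\rho_{\delta_\eps}$ on $V$ away from a thin shell about $\partial U$ in which we interpolate, with $\delta_\eps\to0$, $\eps/\delta_\eps\to0$. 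Then $f_\eps-f\to0$ in $L^\infty_{\loc}$: it vanishes near $U$ and tends to $0$ uniformly on compact subsets of $V$ by uniform continuity of $f$ there. By almost-subadditivity, $\limsup_\eps\kappa_\eps(f_\eps,\Om)\le\limsup_\eps\kappa_\eps(f,U)+\limsup_\eps\kappa_\eps(f*\rho_{\delta_\eps},V)+O(\eta)$, where $\kappa_\eps(f,U)\le\tfrac12|Df|(U)\le\tfrac12|D^jf|(\Om)+\tfrac\eta2$ (as $J_f\subset U$), and the $W^{1,1}$ case of \eqref{eq:SBV limit} together with $|D(f*\rho_{\delta_\eps})|(V)\to|Df|(V)$ gives $\limsup_\eps\kappa_\eps(f*\rho_{\delta_\eps},V)\le\tfrac14|Df|(V)\le\tfrac14(|D^af|+|D^cf|)(\Om)$. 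Hence $\limsup_\eps\kappa_\eps(f_\eps,\Om)\le\tfrac14(|D^af|+|D^cf|)(\Om)+\tfrac12|D^jf|(\Om)+O(\eta)$, and a diagonalization over $\eta\to0$ gives \eqref{eq:Gamma infty limit} as an upper bound.

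The main obstacle is the jump lower bound on $U$ in the second step: turning the merely $L^1$, $\mathcal H^{n-1}$-a.e.\ blow-up of $f$ along $J_f$ into a uniform statement on a large subset of $J_f$ (a Severini--Egorov argument, after reducing to pieces on which $\nu_f$ and $\theta_f$ are nearly constant), constructing a genuinely pairwise disjoint packing of $\eps$-cubes along the rectifiable set $J_f$ covering almost all of it, and — the heart of the matter — transferring the estimate from $f$ to the $L^\infty$-close competitors $f_\eps$, which is precisely the mechanism producing the coefficient $\tfrac12$ rather than $\tfrac14$ and is unavailable under mere $L^1_{\loc}$-convergence. The remaining points are of bookkeeping type: making precise the almost-subadditivity of $\kappa_\eps$ over $\{U,V\}$, controlling the cubes meeting the shell about $\partial U$, handling unbounded $\Om$ by exhaustion, and the various diagonalizations.
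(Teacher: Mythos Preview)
Your $\Gamma_p$ arguments and the $\Gamma_\infty$-liminf are along the right lines and close to the paper's approach; for the liminf you should localize around a \emph{compact} $K\subset J_f$ capturing most of $|D^jf|$, as the paper does, rather than around all of $J_f$---otherwise you run into the same obstruction described next.

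The genuine gap is in your $\Gamma_\infty$ recovery sequence. Take $\Om=(0,1)^n$ and $f(x)=x_1+g(x_1)$, where $g=\sum_{q\in\mathbb Q\cap(0,1)} c_q\,\mathbbm 1_{[q,1)}$ with $\sum |c_q|<\infty$ is a pure-jump $\BV$ function with dense jump set. Then $D_f\supset J_f$ is dense in $\Om$, so \emph{every} open $U\supset D_f$ is dense, hence $\overline U=\Om$ and $V=\emptyset$. Your ``almost-subadditivity'' then reads $\kappa_\eps(f,\Om)\le\kappa_\eps(f,U)+O(\eta)$, which is false: since $(|D^af|+|D^cf|)(U)<\eta$ forces $|U|<\eta$, the set $\Om\setminus U$ has Lebesgue measure close to $1$, and the $\eps$-cubes lying there contribute of order $\tfrac14|D^af|(\Om\setminus U)\approx\tfrac14$ to $\kappa_\eps(f,\Om)$ but nothing to $\kappa_\eps(f,U)$. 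Equivalently, your side condition $|Df|(\partial U)=0$ is unattainable here, because $\partial U\cap\Om=\Om\setminus U$ carries almost all of $|D^af|$. The mollification strategy simply cannot start when the jump set is dense. The paper avoids this entirely by invoking a nontrivial $L^\infty$-approximation theorem from \cite{L-appr} (stated here as Theorem~\ref{thm:approximation result} and Corollary~\ref{cor:approximation result}): every $f$ with $\Var(f,\Om)<\infty$ admits $f_i\in\SBV_{\loc}(\Om)$ with $f_i-f\to 0$ in $L^\infty(\Om)$, $\limsup_i|D^af_i|(\Om)\le|D^af|(\Om)+|D^cf|(\Om)$, and $\limsup_i|D^jf_i|(\Om)\le|D^jf|(\Om)$; one then applies \eqref{eq:SBV limit} to each $f_i$ and diagonalizes. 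This approximation---absorbing the Cantor part into an absolutely continuous part while keeping $L^\infty$-closeness---is the missing ingredient, and your assertion that $D_f$ is $\mathcal H^{n-1}$-$\sigma$-finite (itself not a standard fact for the \emph{classical} discontinuity set, as opposed to $S_f$) does not sidestep it.
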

	
	Thus we see that for all $L^p_{\loc}$-topologies when $1\le p<\infty$, the $\Gamma_p$-limit is the same.
	However, when the topology is strengthened to $L^{\infty}_{\loc}$, we obtain a different limit,
	which coincides with \eqref{eq:SBV limit} in the case of SBV functions.
	
	\section{Preliminaries}\label{sec:definitions}
	
	Our definitions and notation are standard, and
	the reader may consult e.g. the monograph Evans--Gariepy \cite{EvGa} for more background.
	We will work in the Euclidean space $\R^n$ with $n\ge 1$.
	We denote the $n$-dimensional Lebesgue outer measure by $\mathcal L^n$.
	We denote the $s$-dimensional Hausdorff measure by $\mathcal H^{s}$;
	we will only consider $\mathcal H^{n-1}$.
	
	We denote the characteristic function of a set $A\subset\R^n$ by $\mathbbm{1}_A\colon \R^n\to \{0,1\}$.
	We denote by $|v|$ the Euclidean norm of $v\in \R^n$,
	and we also write $|A|:=\mathcal L^n(A)$ for a set $A\subset \R^n$.
	We write $B(x,r)$ for an open ball in $\R^n$ with center $x\in \R^n$
	and radius $r>0$, that is, $B(x,r)=\{y \in \R^n \colon |y-x|<r\}$.
%	We write $Q_{v}(x,r)$ for a cube centered at $x\in \R^n$, with side length $r$, and with one
%	face perpendicular to the unit vector $v$; note that such a cube is not unique when
%	$n\ge 3$.
	By $Q(x,r)$ we mean a cube centered at $x\in \R^n$, with side length $r$,
	and with sides parallel to the coordinate axes.
	We always understand cubes to be open.
	By $Q^{n-1}(z,r)$ we mean a cube
	in $\R^{n-1}$ centered at $z\in \R^{n-1}$, with side length $r$, and with sides parallel to the coordinate axes.
	
	By ``measurable'' we mean $\mathcal L^n$-measurable, unless otherwise specified.
	If a function $f$ is in $L^1(D)$ for some measurable set $D \subset \R^n$ of nonzero and finite Lebesgue
	measure, we write
	\[
	f_D:=\vint{D} f(y) \,d\mathcal L^n(y) \coloneqq \frac{1}{\mathcal L^n(D)} \int_D f(y) \,d\mathcal L^n(y)
	\] 
	for its mean value in $D$.
	
	By $\Om$ we always denote an open subset of $\R^n$.
	A sequence of positive Radon measures $\mu_i$ in $\Om$
	converges in the weak* sense to a positive Radon
	measure $\mu$, denoted by  $\mu_i\overset{*}{\rightharpoondown}\mu$, if
	\[
	\lim_{i\to\infty}\int_{\Om} \phi \,d\mu_i=\int_{\Om} \phi \,d\mu
	\quad \textrm{for all }\phi\in C_c(\Om).
	\]
	Let $\mu,\nu$ be two positive Radon measures on $\Om$.
	We can write the Radon--Nikodym decomposition of $\mu$ with respect to $\nu$ as
	\begin{equation}\label{eq:Radon Nikodym}
		\mu=\frac{d\mu}{d\nu}\,d\nu +\nu^s.
	\end{equation}
	%For any unit vector $v\in\R^n$,
	By \cite[Theorem 3.2]{Fol} we have
	\begin{equation}\label{eq:diff wrt cubes}
		\frac{d\mu}{d\nu}(x)=\lim_{r\to 0}\frac{\mu(Q(x,r))}{\nu(Q(x,r))}
	\end{equation}
	for $\nu$-almost every $x\in\Om$.
	
	Let $S\subset \R^n$ be an $\mathcal H^{n-1}$-measurable set.
	We say that $S$ is countably $\mathcal H^{n-1}$-rectifiable if there exist countably many Lipschitz
	mappings $f_j\colon \R^{n-1}\to \R^n$ such that
	\[
	\mathcal H^{n-1}\left(S\setminus \bigcup_{j=1}^{\infty} f_j(\R^{n-1})\right)=0.
	\]
	
	The theory of $\BV$ mappings that we present next can be found in the monograph
	Ambrosio--Fusco--Pallara \cite{AFP}.
	As before, let $\Om\subset\R^n$ be an open set.
	A function
	$f\in L^1(\Omega)$ is of bounded variation,
	denoted $f\in \BV(\Omega)$, if its weak derivative
	is an $\R^{n}$-valued Radon measure with finite total variation. This means that
	there exists a (unique) Radon measure $Df$
	such that for all $\varphi\in C_c^1(\Omega)$, the integration-by-parts formula
	\[
	\int_{\Omega}f\frac{\partial\varphi}{\partial x_k}\,d\mathcal L^n
	=-\int_{\Omega}\varphi\,d(Df)_k,\quad k=1,\ldots,n,
	\]
	holds.
	The total variation of $Df$ is denoted by $|Df|$.
	If we do not know a priori that a mapping $f\in L^1_{\loc}(\Om)$
	is a BV function, we consider
	\[
	\Var(f,\Om):=\sup\left\{\int_{\Om}f_j\dive\varphi\,d\mathcal L^n,\,\varphi\in C_c^{1}(\Om;\R^{n}),
	\,|\varphi|\le 1\right\}.
	\]
	If $\Var(f,\Om)<\infty$, then the $\R^{n}$-valued Radon measure $Df$
	exists and $\Var(f,\Om)=|Df|(\Om)$
	by the Riesz representation theorem, and $f\in\BV(\Om)$ provided that $f\in L^1(\Om)$.
	
	The coarea formula states that for a function $f\in L^1_{\loc}(\Om)$ with $\Var(f,\Om)<\infty$, we have
	\begin{equation}\label{eq:coarea}
		|Df|(\Om)=\int_{-\infty}^{\infty}|D\mathbbm{1}_{\{f>t\}}|(\Om)\,dt.
	\end{equation}
	Here we abbreviate $\{f>t\}:=\{x\in \Om\colon f(x)>t\}$.
	
	Denote by $S_f\subset \Om$ the set of non-Lebesgue points of $f\in \BV_{\loc}(\Om)$.
	Given a unit vector $\nu\in \R^n$, we define the half-balls
	\begin{align*}
		B_{\nu}^+(x,r)\coloneqq \{y\in B(x,r)\colon( y-x)\cdot\nu>0\},\\
		B_{\nu}^-(x,r)\coloneqq \{y\in B(x,r)\colon (y-x)\cdot\nu<0\},
	\end{align*}
	where $( y-x)\cdot\nu$ denotes the inner product.
	We say that $x\in \Om$ is an approximate jump point of $f$ if there exist a unique unit vector $\nu\in \R^n$
	and distinct numbers $f^+(x), f^-(x)\in\R$ such that
	\[
	\lim_{r\to 0}\,\vint{B_{\nu}^+(x,r)}|f(y)-f^+(x)|\,d\mathcal L^n(y)=0
	\]
	and
	\[
	\lim_{r\to 0}\,\vint{B_{\nu}^-(x,r)}|f(y)-f^-(x)|\,d\mathcal L^n(y)=0.
	\]
	We define $\nu_f(x):=\nu.$
	The set of all approximate jump points is denoted by $J_f$.
	For $f\in\BV(\Om)$, we have that $\mathcal H^{n-1}(S_f\setminus J_f)=0$, see \cite[Theorem 3.78]{AFP},
	and also that $J_f$ is a countably $\mathcal H^{n-1}$-rectifiable Borel set.
	We write the Radon-Nikodym decomposition of the variation measure of $f$ into the absolutely continuous and singular parts with respect to $\mathcal L^n$
	as $Df=D^a f+D^s f$.
	Furthermore, we define the Cantor and jump parts of $Df$ by
	\[
	D^c f\coloneqq  D^s f\mres (\Om\setminus S_f),\qquad D^j f\coloneqq D^s f\mres J_f.
	\]
	Here
	\[
	D^s f \mres J_f(A):=D^s f (J_f\cap A),\quad \textrm{for } D^s f\textrm{-measurable } A\subset \R^n.
	\]
	Since $\mathcal H^{n-1}(S_f\setminus J_f)=0$ and $|Df|$ vanishes on
	$\mathcal H^{n-1}$-negligible sets, we get the decomposition (see \cite[Section 3.9]{AFP})
	\[
	Df=D^a f+ D^c f+ D^j f.
	\]
	When $|D^c f|(\Om)=0$, we say that $f$ is a special function of bounded variation, denoted by $f\in \SBV(\Om)$.
	When $|D^s f|(\Om)=0$, then $f$ is a Sobolev function, $f\in W^{1,1}(\Om)$.
	For the jump part, we know that
	\begin{equation}\label{eq:jump part representation}
		d|D^j f|=|f^{+}-f^-|\,d\mathcal H^{n-1}\mres J_f.
	\end{equation}

	If $\Var(f,\Om)=\infty$, we interpret $|Df|(\Om)=|D^a f|(\Om)=|D^c f|(\Om)=|D^j f|(\Om)=\infty$.
	
	\section{Lower bounds}
	
	In this section we prove the lower bounds of Theorem \ref{thm:main}.
	As usual, $\Om$ denotes an arbitrary open subset of $\R^n$.
	Let $\rho$ be a standard mollifier and let $\rho_{\delta}(x):=\delta^{-n} \rho(x/\delta)$, $\delta>0$.
	
	We start with the following lemma from
	Step 3 of the proof of Theorem 2.2  in \cite{FMS18}.
	\begin{lemma}\label{lem:mollifiers}
		Let $f\in L^1_{\loc}(\Om)$, let $\eps>0$, $\delta>0$, and suppose $\Om'\Subset \Om$
		with $\dist(\Om',\R^n\setminus\Om)>\delta$. Then
		\[
		\kappa_\eps(\rho_\delta*f,\Om')
		\leq  \kappa_\eps (f,\Om).
		\]
	\end{lemma}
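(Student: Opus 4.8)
The plan is to use that $\rho_\delta*f$ is a weighted average of translates of $f$ and that translating a family of cubes preserves its admissibility. Fix $\eps$, $\delta$ and $\Om'$ as in the statement, and let $\mathcal F_\eps$ be any family of pairwise disjoint open cubes of side length $\eps$ contained in $\Om'$. Since $\rho$ is a standard mollifier, $\rho_\delta$ is supported in $B(0,\delta)$, and because $\dist(\Om',\R^n\setminus\Om)>\delta$, for every $y\in B(0,\delta)$ the translated cube $Q-y$ lies in $\Om$ for each $Q\in\mathcal F_\eps$; moreover $\{Q-y:Q\in\mathcal F_\eps\}$ is again a family of pairwise disjoint open cubes of side length $\eps$ contained in $\Om$, since translation preserves disjointness, side length and orientation. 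So for each fixed $y\in B(0,\delta)$ this translated family is admissible in the definition of $\kappa_\eps(f,\Om)$.

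Next I would record the elementary identity, valid for a.e.\ $x\in\Om'$,
\[
(\rho_\delta*f)(x)-\vint Q(\rho_\delta*f)(z)\,dz=\int_{B(0,\delta)}\rho_\delta(y)\Big(f(x-y)-\vint Q f(z-y)\,dz\Big)\,dy,
\]
which follows from the definition of convolution and Fubini's theorem (all integrands are locally integrable since $f\in L^1_{\loc}(\Om)$). Taking absolute values, applying the integral triangle inequality with respect to the probability measure $\rho_\delta(y)\,dy$, averaging over $x\in Q$, and using Fubini once more gives
\[
\vint Q\Big|(\rho_\delta*f)(x)-\vint Q(\rho_\delta*f)\Big|\,dx\le\int_{B(0,\delta)}\rho_\delta(y)\Big(\vint Q\big|f(x-y)-\vint Q f(z-y)\,dz\big|\,dx\Big)\,dy.
\]
A change of variables $x\mapsto x-y$ in the inner average turns $\vint Q$ into $\vint{Q-y}$ (and $\vint Q f(z-y)\,dz$ into $\vint{Q-y} f$), so that after summing over $Q\in\mathcal F_\eps$ and multiplying by $\eps^{n-1}$ we obtain
\[
\eps^{n-1}\sum_{Q\in\mathcal F_\eps}\vint Q\Big|(\rho_\delta*f)(x)-\vint Q(\rho_\delta*f)\Big|\,dx\le\int_{B(0,\delta)}\rho_\delta(y)\,\eps^{n-1}\sum_{Q\in\mathcal F_\eps}\vint{Q-y}\Big|f(x)-\vint{Q-y} f\Big|\,dx\,dy.
\]

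Finally, by the first paragraph, for each $y\in B(0,\delta)$ the family $\{Q-y:Q\in\mathcal F_\eps\}$ is admissible for $\Om$, so the inner sum is at most $\kappa_\eps(f,\Om)$; since $\int_{B(0,\delta)}\rho_\delta=1$, the right-hand side is at most $\kappa_\eps(f,\Om)$. Taking the supremum over all admissible families $\mathcal F_\eps$ yields $\kappa_\eps(\rho_\delta*f,\Om')\le\kappa_\eps(f,\Om)$. The only genuinely non-formal point is the containment $Q-y\subset\Om$ for every $y$ in the support of $\rho_\delta$, which is precisely what the hypothesis $\dist(\Om',\R^n\setminus\Om)>\delta$ provides; the rest is bookkeeping with Fubini and translation invariance, so I do not expect a real obstacle here.
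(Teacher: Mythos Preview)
Your argument is correct: writing $\rho_\delta*f$ as a convex combination of translates of $f$, applying the triangle inequality, and using that each translated family $\{Q-y\}$ is admissible in $\Om$ is exactly the standard way to prove such an estimate. The paper does not give its own proof of this lemma but simply cites it from Step~3 of the proof of Theorem~2.2 in \cite{FMS18}; your proof is essentially the argument one finds there.
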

	
	The following theorem gives the lower bound of \eqref{eq:Gamma p limit}.
	It was previously proved in \cite{ARBDN}, but we give a simpler proof.
	
	\begin{theorem}\label{thm:lower bound abs cont}
		Let $f\in L^1_{\loc}(\Om)$.
		Consider a family $\{f_\eps\}_{\eps>0}$ with $f_{\eps}\to f$ in $L^1_{\loc}(\Om)$ as $\eps\to 0$.
		Then we have
		\begin{align}\label{eq:lower bound}
			\tfrac{1}{4}|Df|(\Om)
			\le\liminf_{\eps\to 0}\kappa_\eps(f_\eps,\Om).
		\end{align}
	\end{theorem}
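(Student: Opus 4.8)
The plan is to reduce, via the mollification estimate of Lemma~\ref{lem:mollifiers}, to a direct computation for smooth functions, where orienting the microscopic cubes along the direction of the gradient produces exactly the constant $\tfrac14$. First I would discard the trivial case: if $\liminf_{\eps\to 0}\kappa_\eps(f_\eps,\Om)=\infty$ there is nothing to prove, so pick a sequence $\eps_j\to 0$ with $\kappa_{\eps_j}(f_{\eps_j},\Om)\to L:=\liminf_{\eps\to0}\kappa_\eps(f_\eps,\Om)<\infty$. Fix $\Om'\Subset\Om$ and $0<\delta<\dist(\Om',\R^n\setminus\Om)$, and set $g_\eps:=\rho_\delta*f_\eps$, $g:=\rho_\delta*f$. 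By Lemma~\ref{lem:mollifiers}, $\kappa_\eps(g_\eps,\Om')\le\kappa_\eps(f_\eps,\Om)$, hence $\liminf_{j\to\infty}\kappa_{\eps_j}(g_{\eps_j},\Om')\le L$. Since $f_\eps\to f$ in $L^1_{\loc}(\Om)$ and $\delta$ is fixed, differentiating under the convolution shows $\nabla g_\eps=(\nabla\rho_\delta)*f_\eps\to\nabla g$ uniformly on $\Om'$, and likewise $\sup_\eps\|\nabla^2 g_\eps\|_{L^\infty(\Om')}=:M<\infty$ (because $\{\|f_\eps\|_{L^1}\}$ is bounded on a $\delta$-neighbourhood of $\Om'$).

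The heart of the matter is the claim that for $C^2$ functions $\{g_\eps\}$ on $\Om'$ with $\nabla g_\eps\to\nabla g$ uniformly and such a uniform bound $M$, one has $\liminf_{\eps\to0}\kappa_\eps(g_\eps,\Om')\ge\tfrac14\int_{\Om'}|\nabla g|\,dx$. To prove it, fix $\eta>0$ and $\Om''\Subset\Om'$ and partition $\Om''$ up to a null set into finitely many cubes $P_1,\dots,P_N$ of common side length $r$ with centres $x_k$, where $r$ is small enough that $|\nabla g-\nabla g(x_k)|<\eta$ on $P_k$ and $\sum_k|\nabla g(x_k)|\,|P_k|\ge\int_{\Om''}|\nabla g|-\eta|\Om''|$. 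Inside each $P_k$ I would pack a maximal rotated grid of cubes $Q$ of side $\eps$ with one edge parallel to $\nabla g(x_k)$ (any orientation if $\nabla g(x_k)=0$); the $P_k$ being disjoint, the union of all these cubes is an admissible family for $\kappa_\eps(g_\eps,\Om')$, with $N_k=N_k(\eps)$ cubes in $P_k$ satisfying $N_k\eps^n\to|P_k|$ as $\eps\to0$. On such a cube $Q$ with centre $y\in P_k$, Taylor's theorem gives $g_\eps(x)=g_\eps(y)+\nabla g_\eps(y)\cdot(x-y)+R(x)$ with $|R|\le\tfrac{Mn}{8}\eps^2$ on $Q$, so that
\[
\vint Q\Big|g_\eps-\vint Q g_\eps\Big|\ \ge\ \vint Q\Big|\ell-\vint Q\ell\Big|-\tfrac{Mn}{4}\eps^2,\qquad \ell(x):=\nabla g_\eps(y)\cdot(x-y).
\]
Expanding $x-y$ in the orthonormal edge frame $e_1,\dots,e_n$ of $Q$ (with $e_1\parallel\nabla g(x_k)$) and using that the average of $|t|$ over an interval of length $\eps$ equals $\eps/4$,
\[
\vint Q\Big|\ell-\vint Q\ell\Big|\ \ge\ \tfrac{\eps}{4}\,|\nabla g_\eps(y)\cdot e_1|-\tfrac{\eps}{4}\sum_{i\ge2}|\nabla g_\eps(y)\cdot e_i|\ \ge\ \tfrac{\eps}{4}\,|\nabla g(x_k)|-C_n\eta\,\eps,
\]
since $|\nabla g_\eps(y)-\nabla g(x_k)|<2\eta$ for $\eps$ small and $\nabla g(x_k)\cdot e_i=0$ for $i\ge2$. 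Multiplying by $\eps^{n-1}$, summing over the $N_k$ cubes in $P_k$, then over $k$, and letting $\eps\to0$ gives $\liminf_{\eps\to0}\kappa_\eps(g_\eps,\Om')\ge\sum_k|P_k|\big(\tfrac14|\nabla g(x_k)|-C_n\eta\big)\ge\tfrac14\int_{\Om''}|\nabla g|-C_n'\eta|\Om''|$; letting $\eta\to0$ and then $\Om''\uparrow\Om'$ proves the claim.

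It remains to undo the two reductions. Combining the claim with $\liminf_j\kappa_{\eps_j}(g_{\eps_j},\Om')\le L$ yields $L\ge\tfrac14\int_{\Om'}|\nabla(\rho_\delta*f)|\,dx=\tfrac14|D(\rho_\delta*f)|(\Om')$ for every admissible $\delta$. Since $\rho_\delta*f\to f$ in $L^1(\Om')$ as $\delta\to0$, lower semicontinuity of the total variation gives $|Df|(\Om')\le\liminf_{\delta\to0}|D(\rho_\delta*f)|(\Om')\le 4L$, and taking the supremum over $\Om'\Subset\Om$ gives $\tfrac14|Df|(\Om)\le L$, which is \eqref{eq:lower bound} (the case $|Df|(\Om)=\infty$ being covered since $|Df|(\Om')\uparrow|Df|(\Om)$).

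The step I expect to be the main obstacle is the smooth-function claim, specifically the simultaneous control of: (i) orienting each microscopic $\eps$-cube along the local direction of $\nabla g$, which is what forces the sharp constant $\tfrac14$ rather than some orientation-averaged value; (ii) the quadratic Taylor remainder, which must remain negligible after multiplication by $\eps^{n-1}$ times the number (of order $|P_k|\eps^{-n}$) of cubes; and (iii) the oscillation of $\nabla g$ across the mesoscopic cubes $P_k$. Point (ii) is exactly where the uniform $C^2$-bound on $g_\eps=\rho_\delta*f_\eps$ enters, and this is the payoff of the mollification reduction: it is what lets us avoid a direct coarea/slicing treatment of the $f_\eps$ and thereby obtain a simpler proof than the one in \cite{ARBDN}.
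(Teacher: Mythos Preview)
Your argument is correct, and the overall architecture---mollify via Lemma~\ref{lem:mollifiers} to obtain $\tfrac14|D(\rho_\delta*f)|(\Om')\le L$, then let $\delta\to 0$ and $\Om'\nearrow\Om$---matches the paper's. The difference lies in how that middle inequality is obtained. The paper applies the known SBV limit \eqref{eq:SBV limit} directly to the fixed smooth function $\rho_\delta*f$, writes $\kappa_\eps(\rho_\delta*f,\Om')\le \kappa_\eps(\rho_\delta*f_\eps,\Om')+\kappa_\eps(\rho_\delta*(f-f_\eps),\Om')$, bounds the first term by $\kappa_\eps(f_\eps,\Om)$ via Lemma~\ref{lem:mollifiers}, and kills the second term using Poincar\'e's inequality together with $\|D\rho_\delta\|_{L^1}\|f-f_\eps\|_{L^1(\Om'')}\to 0$. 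This is a three-line argument once \eqref{eq:SBV limit} is available. You instead bypass \eqref{eq:SBV limit} entirely and establish the smooth lower bound from scratch for the \emph{varying} functions $g_\eps=\rho_\delta*f_\eps$, exploiting the uniform $C^2$ bound coming from mollification to orient the $\eps$-cubes along $\nabla g$ and run a Taylor--Riemann-sum argument. Your route is more self-contained (it does not invoke the result of \cite{DPFP}) and makes the origin of the constant $\tfrac14$ transparent, at the cost of more computation; the paper's route is shorter but imports a nontrivial external result. One cosmetic point: in your cube estimate the lower bound $\tfrac{\eps}{4}|\nabla g(x_k)|-C_n\eta\eps$ can be negative when $|\nabla g(x_k)|$ is small, but since each cube's contribution to $\kappa_\eps$ is nonnegative you may simply drop those cubes from the competitor family, and the Riemann-sum conclusion is unaffected after $\eta\to 0$.
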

	\begin{proof}
		Fix open sets $\Om'\Subset \Om''\Subset  \Om$.
		Let $\delta>0$ with $\delta<\dist(\Om',\R^n\setminus \Om'')$.
		By \eqref{eq:SBV limit}, we have
		\begin{equation} \label{eq:f delta estimate}
			\begin{split}
				\tfrac{1}{4}	|D(\rho_\delta*f)|(\Om')
				&=\liminf\limits_{\eps\to 0} \kappa_\eps(\rho_\delta*f,\Om')\\
				&\leq \liminf\limits_{\eps\to 0}\kappa_\eps(\rho_\delta*f_\eps,\Om')
				+\limsup_{\eps\to 0}\kappa_\eps(\rho_\delta*(f-f_\eps),\Om')\\
				&\leq  \liminf\limits_{\eps\to 0} \kappa_\eps(f_\eps,\Om) 
				+\limsup_{\eps\to 0}\kappa_\eps(\rho_\delta*(f-f_\eps),\Om')
				\quad\textrm{by Lemma }\ref{lem:mollifiers}.
			\end{split}
		\end{equation}
		Take a family $\mathcal{F}_\eps$ of disjoint open cubes $Q\subset \Om'$ of side length $\eps$. 
		By Poincar\'e's inequality, for every $Q\in \mathcal{F}_\eps$ we have
		\begin{align*}
			\vint{Q}\Big|(\rho_\delta*(f-f_\eps))(x)-\vint{Q}(\rho_\delta*(f-f_\eps))(y)\,dy\Big|\,dx
			\leq C(n)  \eps^{1-n} \int_{Q}|(D\rho_\delta)*(f-f_\eps)|\,dx,
		\end{align*}
		where $C(n)$ is a constant depending only on $n$.
		Here
		\begin{align*}
			\sum_{Q\in \mathcal{F}_\eps}\int_{Q}|(D\rho_\delta)*(f-f_\eps)| \,dx
			&\leq \int_{\Om'}|(D\rho_\delta)*(f-f_\eps)| \,dx\\
			&\leq \Vert D\rho_\delta\Vert_{L^{1}(\R^n)}\Vert f-f_\eps\Vert_{L^1(\Om'')}\\
			&\to 0\quad\textrm{as }\eps\to 0,
		\end{align*}
		and so
		\[
		\limsup_{\eps\to 0}\kappa_\eps(\rho_\delta*(f-f_\eps),\Om')=0.
		\]
		Combining this with \eqref{eq:f delta estimate}, we get 
		\begin{align*}
			\tfrac{1}{4}	|D(\rho_\delta*f)|(\Om')\leq  \liminf\limits_{\eps\to 0} \kappa_\eps(f_\eps,\Om).
		\end{align*}
		Letting $\delta\to 0$ and then $\Om'\nearrow \Om$, we get \eqref{eq:lower bound}.
	\end{proof}
	
	The following proposition will be needed for the lower bound of \eqref{eq:Gamma infty limit}.
	
	\begin{proposition}\label{prop:lower bound jump}
		Let $\Om\subset \R^n$ be open and let $f\in L^{1}_{\loc}(\Om)$.
		Consider a family $\{f_\eps\}_{\eps>0}$ with  $f_{\eps}-f\in L^\infty_{\loc}(\Om)$ and $f_{\eps}-f\to 0$ in $L^\infty_{\loc}(\Om)$ as $\eps\to 0$.
		Then we have
		\[
			\tfrac{1}{2}|D^ j f|(\Om)
			\le\liminf_{\eps\to 0}\kappa_\eps(f_\eps,\Om).
		\]
	\end{proposition}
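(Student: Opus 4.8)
We build, for each small $\eps>0$, an explicit competitor family $\mathcal F_\eps$ of $\eps$-cubes straddling the jump set $J_f$ and estimate the resulting sum from below; since $\kappa_\eps(f_\eps,\Om)$ is a supremum over all such families, this suffices. We may assume $\liminf_{\eps\to0}\kappa_\eps(f_\eps,\Om)<\infty$. Since $f_\eps-f\to0$ in $L^\infty_\loc(\Om)$ forces $f_\eps\to f$ in $L^1_\loc(\Om)$, Theorem \ref{thm:lower bound abs cont} gives $\tfrac14|Df|(\Om)\le\liminf_{\eps\to0}\kappa_\eps(f_\eps,\Om)<\infty$, so $f\in\BV_\loc(\Om)$ with $|Df|(\Om)<\infty$; in particular the traces $f^\pm$ along the countably $\mathcal H^{n-1}$-rectifiable set $J_f$, the identity $d|D^jf|=|f^+-f^-|\,d\mathcal H^{n-1}\mres J_f$, and the one-dimensional slicing theory of $\BV$ functions are available.

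Fix $\eta>0$ and a small $\eta'>0$, to be sent to $0$ after $\eta$. Using rectifiability of $J_f$, the $C^1$ structure of its rectifying sets, and Lusin's theorem applied to $f^\pm$ with respect to $\mathcal H^{n-1}\mres J_f$, I choose finitely many pairwise disjoint cubes $C_1,\dots,C_N\Subset\Om$ such that, after an orthogonal change of coordinates for each $i$, the set $C_i\cap J_f$ coincides $\mathcal H^{n-1}$-a.e.\ with the graph $\Gamma_i=\{(x',\varphi_i(x')):x'\in D_i\}$ of a $C^1$ function $\varphi_i$ on an $(n-1)$-cube $D_i$ with $\|\nabla\varphi_i\|_{L^\infty}<\eta'$, the traces $f^+,f^-$ are continuous along $\Gamma_i$, and $\sum_{i=1}^N|D^jf|(\Gamma_i)\ge|D^jf|(\Om)-\eta$. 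For $\eps$ small, let $\mathcal F^{(i)}_\eps$ consist of the (rotated) cubes of side $\eps$ centered at the points $\xi_Q=(\xi,\varphi_i(\xi))$, where $\xi$ ranges over the $\eps$-grid cells contained in $D_i$. Because $\|\nabla\varphi_i\|_{L^\infty}<\eta'$, each $Q\in\mathcal F^{(i)}_\eps$ lies in $C_i$, contains the graph piece lying over its base, and is split by that graph into parts $Q^\pm$ with $|Q^\pm|=\tfrac12|Q|\,(1+O(\eta'))$, and distinct cubes of $\mathcal F^{(i)}_\eps$ project onto disjoint cells of the base hyperplane; hence $\mathcal F_\eps:=\bigcup_{i=1}^N\mathcal F^{(i)}_\eps$ is admissible and
\[
\kappa_\eps(f_\eps,\Om)\ \ge\ \eps^{n-1}\sum_{i=1}^{N}\ \sum_{Q\in\mathcal F^{(i)}_\eps}\ \vint{Q}\Bigl|f_\eps(x)-\vint{Q}f_\eps(y)\,dy\Bigr|\,dx.
\]

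For $Q\in\mathcal F^{(i)}_\eps$ let $g_Q$ be the two-valued function on $Q$ equal to the trace value $f^-(\xi_Q)$ on $Q^-$ and $f^+(\xi_Q)$ on $Q^+$. A short computation using $|Q^-|\approx|Q^+|$ gives $\vint{Q}|g_Q-(g_Q)_Q|\ge\tfrac12|f^+(\xi_Q)-f^-(\xi_Q)|\,(1-C\eta')$, and the triangle inequality gives
\[
\vint{Q}\Bigl|f_\eps-(f_\eps)_Q\Bigr|\ \ge\ \vint{Q}|g_Q-(g_Q)_Q|\ -\ 2\|f_\eps-f\|_{L^\infty(C_i)}\ -\ 2\vint{Q}|f-g_Q|.
\]
Multiply by $\eps^{n-1}$ and sum over $Q\in\mathcal F^{(i)}_\eps$. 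The first term yields a Riemann sum converging, as $\eps\to0$, to at least $(1-C\eta')\tfrac12\int_{D_i}|f^+-f^-|(x',\varphi_i(x'))\,dx'\ge(1-C\eta')\tfrac12|D^jf|(\Gamma_i)$ (the last step by $\|\nabla\varphi_i\|_{L^\infty}<\eta'$ and the area formula). The second term is at most $C|D_i|\,\|f_\eps-f\|_{L^\infty(C_i)}\to0$. The third term is at most $\tfrac1\eps\int_{T^{(i)}_\eps}|f-\tilde g_i|\,dx+o(1)$, where $T^{(i)}_\eps:=\bigcup_{Q\in\mathcal F^{(i)}_\eps}Q$ is a slab of width $\lesssim\eps$ following $\Gamma_i$, $\tilde g_i(x)$ equals $f^\mp(x',\varphi_i(x'))$ according as $x_n\lessgtr\varphi_i(x')$, and the $o(1)$ bounds the uniform discrepancy between $\tilde g_i$ and the piecewise-constant $g_Q$, controlled by the modulus of continuity of the traces on $\Gamma_i$.

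The crux is $\tfrac1\eps\int_{T^{(i)}_\eps}|f-\tilde g_i|\to0$. By Fubini in the $x_n$-direction this integral equals $\int_{D_i}\tfrac1\eps\int_{\{|s-\varphi_i(x')|<c\eps\}}|f(x',s)-\tilde g_i(x',s)|\,ds\,dx'$. For $\mathcal H^{n-1}$-a.e.\ $x'\in D_i$ the slice $s\mapsto f(x',s)$ is a $\BV$ function having an approximate jump at $s=\varphi_i(x')$ with one-sided limits $f^\mp(x',\varphi_i(x'))$ — this is the compatibility of one-dimensional restrictions with $D^jf$ — so the inner average tends to $0$; and since a $\BV$ function on an interval is pointwise bounded by its mean plus its pointwise variation, $\sup_s|f(x',s)|$ is dominated by an $L^1(D_i)$ function, the slicing estimate $\int_{D_i}\Var(f(x',\cdot))\le|Df|(\Om)<\infty$ providing the integrable bound, so dominated convergence applies. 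Assembling the three estimates, then letting $\eps\to0$, $\eta'\to0$, $\eta\to0$ in turn, gives $\liminf_{\eps\to0}\kappa_\eps(f_\eps,\Om)\ge\tfrac12|D^jf|(\Om)$. The main obstacle is precisely this strong attainment of the traces; the decomposition of $J_f$ into nearly-flat graph pieces with continuous traces is routine but technical. Finally, note that $L^\infty_\loc$-convergence is exploited only via $\|f_\eps-f\|_{L^\infty(C_i)}\to0$: this is what prevents $f_\eps$ from smearing a jump out across a cube of side $\eps$, thereby forcing each straddling cube to register the full oscillation $\tfrac12|f^+-f^-|$ rather than the $\tfrac14$ a mollification would produce.
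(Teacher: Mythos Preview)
Your argument is essentially correct and follows a genuinely different route from the paper's. Two structural differences stand out. First, the paper works with an \emph{almost optimal} family $\mathcal F_{\eps_i}$ for each $\eps_i$, forms the associated measures $\mu_{\eps_i}$, passes to a weak* limit $\mu$, and proves $d\mu/d|D^jf|\ge\tfrac12$ pointwise via a local replacement argument (inside a small cube $Q(x,r)$ one swaps the cubes of $\mathcal F_{\eps_i}$ for cubes straddling the graph and invokes almost-optimality). You instead build a single explicit competitor $\mathcal F_\eps$ along all the graph pieces $\Gamma_i$ simultaneously and bound $\kappa_\eps(f_\eps,\Om)$ from below by the corresponding sum; since $\kappa_\eps$ is a supremum this is immediate, and the measure-theoretic machinery is avoided entirely. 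Second, to control $\vint_Q|f-g_Q|$ the paper uses Egorov's theorem to make the half-ball averages $\vint_{B^\pm_{\nu_f(y)}(y,s)}|f-f^\pm(y)|$ converge \emph{uniformly} on a large subset $H\subset J_f$, whereas you slice in the normal direction and use that a one-variable $\BV$ function attains its one-sided limits in $L^1$-average at every point, combined with dominated convergence (the bound $\esssup_s|f(x',s)|\le\vint|f(x',\cdot)|+\Var f(x',\cdot)$, both integrable in $x'$, is a valid dominator). What your approach buys is economy: no weak* compactness, no Radon--Nikodym differentiation. What the paper's approach buys is that the error analysis stays in $\R^n$ (no appeal to the slicing theory of $\BV$), and one obtains the local inequality $\mu\ge\tfrac12|D^jf|$ along the way.

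One imprecision worth flagging: you assert that $C_i\cap J_f$ coincides $\mathcal H^{n-1}$-a.e.\ with a \emph{full} $C^1$ graph $\Gamma_i=\{(x',\varphi_i(x')):x'\in D_i\}$ on which $f^\pm$ are continuous. In general Lusin only gives continuity of $f^\pm$ on a closed \emph{subset} $H_i$ of such a graph, and the rectifiability decomposition only places most (not all) of $J_f\cap C_i$ on the graph. The fix is standard---place cubes only over grid cells meeting $\pi(H_i)$, use that a.e.\ point of $\pi(H_i)$ has density $1$, and run the Riemann-sum argument on $H_i$ rather than on all of $D_i$; this is exactly how the paper handles it with its sets $H_k$ and the density estimate for $\pi_k(H_k)$. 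With that adjustment your proof goes through.
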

	\begin{proof}
		We can assume that $\liminf_{\eps\to 0}\kappa_\eps(f_\eps,\Om)<\infty$.
		Then by Theorem \ref{thm:lower bound abs cont}, we have $\Var(f,\Om)<\infty$.
		%Now it is enough to consider a sequence $\eps_i\searrow 0$, and 
		By choosing a suitable sequence $\eps_i\searrow 0$,
		we have $\kappa_{\eps_i}(f_{\eps_i},\Om)<\infty$ for every $i\in\N$ and
		\[
		\lim_{i\to\infty}\kappa_{\eps_i}(f_{\eps_i},\Om)
		=\liminf_{\eps\to 0}\kappa_\eps(f_\eps,\Om),
		\]
		and we need to show that
		\begin{equation}\label{eq:lower bound eps i}
			\tfrac{1}{2}|D^ j f|(\Om)\le \lim_{i\to\infty}\kappa_{\eps_i}(f_{\eps_i},\Om).
		\end{equation}
		For each $i\in\N$, we can choose an ``almost optimal'' collection $\mathcal F_{\eps_i}$
		of pairwise disjoint $\eps_i$-cubes in $\Om$, such that
		\begin{equation}\label{eq:almost optimality}
			\kappa_{\eps_i}(f_{\eps_i},\Om)-\frac{1}{i}\le \eps_i^{n-1}\sum_{Q\in \mathcal F_{\eps_i}}\vint{Q}\left|f_{\eps_i}-\vint{Q}f_{\eps_i}\,dy\right|dx\le \kappa_{\eps_i}(f_{\eps_i},\Om).
		\end{equation}
		Then we define the positive Radon measures
		\[
		\mu_{\eps_i}:=\eps_i^{n-1}\sum_{Q\in \mathcal F_{\eps_i}}\left|f_{\eps_i}-\vint{Q}f_{\eps_i}\,dy\right|\frac{\mathbbm{1}_Q}{|Q|}\,d\mathcal L^n.
		\]
		We have
		\[
		\mu_{\eps_i}(\Om)=\eps_i^{n-1}\sum_{Q\in \mathcal F_{\eps_i}}\vint{Q}\left|f_{\eps_i}-\vint{Q}f_{\eps_i}\,dy\right|dx
		\le \kappa_{\eps_i}(f_{\eps_i},\Om).
		\]
		Thus by passing to a subsequence (not relabeled), we have that
		$\mu_{\eps_i}\overset{*}{\rightharpoondown}\mu$ for some positive Radon measure $\mu$ on $\Om$.
		
		Fix $0<\sigma<1/10$.
		Recall that the set of approximate jump points $J_f\subset \Om$
		is a countably $\mathcal H^{n-1}$-rectifiable Borel set.
		Using Lusin's and Egorov's theorems,
		we find a Borel set $H\subset J_f$ such that $\mathcal H^{n-1}(H)<\infty$ and
		\begin{equation}\label{eq:H properties}
			|Df|(J_f\setminus H)<\sigma
			\quad\textrm{and}\quad
			\nu_f|_H,\,f^{+}|_H,\,f^{-}|_H
			\quad\textrm{are continuous,}
		\end{equation}
		and
		\begin{equation}\label{eq:uniform convergence}
			\vint{B_{\nu_f(x)}^{\pm}(x,s)}|f-f^{\pm}(x)|\,dy\to 0
			\quad\textrm{as }s\to 0
		\end{equation}
		uniformly for all $x\in H$.
		We can further assume that
		$H$ is the disjoint union of Borel sets $H_1,\ldots,H_m$ that are a positive distance
		from each other and from $\partial \Om$, and such that each $H_k$ is contained in a graph
		\[
		S_k:=\{x\in \R^n\colon h_k(\pi_k(x))=\pi_k^{\perp}(x)\},
		\]
		where $\pi_k$ is the orthogonal projection to a hyperplane $P_k$, and
		\begin{equation}\label{z1}
				h_k\in C^1(P_k)
			\quad\textrm{with}\quad
			\Vert \nabla h_k\Vert_{L^{\infty}(P_k)}\le \sigma.
		\end{equation}
		We can further assume that every point of $\pi_k(H_k)$ is a point of density $1$ on $P_k$,
		and that $\nu_f(x)$ is a normal vector of the graph $S_k$ for all
		$x\in H_k$.
		
		We consider $H_1$; the other $H_k$'s can be handled analogously.
		Fix a point $x\in H_1$.
		We can assume that
		$P_1=\R^{n-1}\times \{0\}$, which we simply denote by $\R^{n-1}$.
		For each $r>0$, consider $Q(x,r)$, that is,
		a cube centered at $x$ and with the sides parallel to coordinate axes.
		For all sufficiently small $r>0$, then choosing sufficiently small  $\eps>0$ we have
		\begin{equation}\label{eq:1 minus delta estimate}
		\left|	\pi_1(H_1)\cap Q^{n-1}(\pi_1(x),r-4\sqrt n\eps)\right|\ge (1-\sigma)r^{n-1},
		\end{equation}
		and (using \eqref{eq:H properties})
		\begin{equation}\label{eq:f continuity}
			|f^{\pm}(y)-f^{\pm}(x)|\le \sigma\quad\textrm{for all }y\in H_1\cap Q(x,r).
		\end{equation}
		For the moment, let us fix such a sufficiently small $r>0$ so that $Q(x,3r)$ does not intersect
		any $H_2,\ldots, H_m$ or $\partial\Om$,
		and also so that $\mu(\partial Q(x,r))=0$.
		For large enough $i\in\N$ such that
		\eqref{eq:1 minus delta estimate} holds with $\eps=\eps_i<r/(4n)$,
		consider a grid $\{Q^{n-1}(z_j,\eps_i)\}_{j=1}^N$ of $(n-1)$-cubes contained
		in $Q^{n-1}(\pi_1(x),r-2\sqrt n\eps_i)$ and  containing
		 $Q^{n-1}(\pi_1(x),r-4\sqrt n\eps_i)$
		and then consider the cubes $\{Q_j=Q(x_j,\eps_i)\}_{j=1}^N$ with $x_j:=(z_j,h(z_j))$.
		Denote by $Q_j$, $j=1,\ldots,M$ those cubes that intersect $H_1$.
		Thus for each $j=1,\ldots,M$, there is a point $y_j\in H_1\cap Q_j$.
		Denote the standard unit basis vectors of $\R^n$ by $e_1,\ldots,e_n$.
		Consider the half-spaces
		\[
		A_{j,1}:=\{y\in \R^n\colon (y-y_j)\cdot \nu_f(y_j)\ge 0\}
		\quad\textrm{and}\quad
		A_{j,2}:=\{y\in \R^n\colon (y-y_j)\cdot \nu_f(y_j)\le 0\}.
		\]
		Choosing $i$ large enough, by \eqref{eq:uniform convergence} we have for all $y\in H_1$
		\begin{equation}\label{eq:uniform convergence 2}
			\vint{B_{\nu_f(y)}^{\pm}(y,s)}|f-f^{\pm}(y)|\,dz\le \sigma
			\quad\textrm{for all }0<s\le 2n\eps_i.
		\end{equation}
		Thanks to \eqref{z1}, we also have
		\begin{equation}\label{eq:small intersection}
		\frac 12 -C\sigma\le \frac{|A_{j,1}\cap Q_j|}{|Q_j|}\le\frac 12 +C\sigma
		%|Q_j^+\cap A_{j,2}|\le C\sigma\eps_i^n
		%\quad\textrm{and}\quad
		%|Q_j^-\cap A_{j,1}|\le C\sigma\eps_i^n,
		\end{equation}
		for some constant $C\ge 1$ depending only on $n$,
		and the same for $A_{j,2}$. We can assume that $C\sigma \le 1/4$.
		Then also
		\begin{equation}\label{eq:average estimate}
			\begin{split}
				&\left|f_{Q_j}-\frac 12 (f^{+}(y_j)+f^{-}(y_j))\right|\\
				&= \left|\frac{|A_{j,1}\cap Q_j|}{|Q_j|}f_{A_{j,1}\cap Q_j}
				+\frac{|A_{j,2}\cap Q_j|}{|Q_j|}f_{A_{j,2}\cap Q_j}-\frac 12 (f^{+}(y_j)+f^{-}(y_j))\right|\\
				&\le \frac{|A_{j,1}\cap Q_j|}{|Q_j|}\,\vint{A_{j,1}\cap Q_j}|f-f^{+}(y_j)|\,dy
				+\frac{|A_{j,2}\cap Q_j|}{|Q_j|}\,\vint{A_{j,2}\cap Q_j}|f-f^{-}(y_j)|\,dy\\
				&\qquad +\left|\frac{|A_{j,1}\cap Q_j|}{|Q_j|}-\frac 12\right||f^+(y_j)|
				+\left|\frac{|A_{j,2}\cap Q_j|}{|Q_j|}-\frac 12\right||f^-(y_j)|\\
				&\le (4n)^n\vint{B_{\nu_f(y_j)}^{+}(y_j,2n\eps_i)}|f-f^{+}(y_j)|\,dy
				+(4n)^n\vint{B_{\nu_f(y_j)}^{-}(y_j,2n\eps_i)}|f-f^{-}(y_j)|\,dy\\
				&\qquad +2C\sigma\sup_{H_1}(|f^+|+|f^-|)
				\quad\textrm{by }\eqref{eq:small intersection},
			\end{split}
		\end{equation}
		where by \eqref{eq:uniform convergence 2},
		\begin{equation}\label{eq:2 eps estimate}
			\begin{split}
				\vint{B_{\nu_f(y_j)}^{\pm}(y_j,2n\eps_i)}|f-f^{\pm}(y_j)|\,dy\le \sigma.
			\end{split}
		\end{equation}
		Using \eqref{eq:small intersection}, we estimate
		\begin{equation}\label{eq:initial estimate}
			\begin{split}
				\vint{Q_j}|f-f_{Q_j}|\,dy
				&\ge \frac {1-2C\sigma}{2} \vint{A_{j,1}\cap Q_j}|f-f_{Q_j}|\,dy
				+\frac {1-2C\sigma}{2} \vint{A_{j,2}\cap Q_j}|f-f_{Q_j}|\,dy.
			\end{split}
		\end{equation}
		Here
		\begin{align*}
			&\vint{A_{j,1}\cap Q_j}|f-f_{Q_j}|\,dy\\
			&\qquad= \vint{A_{j,1}
				\cap Q_j}|f-f^{+}(y_j)+\tfrac 12 f^{+}(y_j)-\tfrac 12 f^{-}(y_j)
			-f_{Q_j}+\tfrac 12 f^{+}(y_j)+\tfrac 12 f^{-}(y_j)|\,dy\\
			&\qquad\ge \frac 12 |f^{+}(y_j)-f^{-}(y_j)|-\vint{A_{j,1}
				\cap Q_j}|f-f^{+}(y_j)|\,dy-\left|f_{Q_j}-\frac 12 (f^{+}(y_j)+f^{-}(y_j))\right|\\
			&\qquad\ge \frac 12 |f^{+}(y_j)-f^{-}(y_j)|
			- 2\times (4n)^n\,\vint{B_{\nu_f(y_j)}^{+}(y_j,2n\eps_i)}|f-f^{+}(y_j)|\,dy\\
			&\qquad\qquad -(4n)^n\,\vint{B_{\nu_f(y_j)}^{-}(y_j,2n\eps_i)}|f-f^{-}(y_j)|\,dy
			-2C\sigma\sup_{H_1}(|f^+|+|f^-|)\quad\textrm{by }\eqref{eq:average estimate}\\
			&\qquad\ge \frac 12 |f^+(y_j)-f^-(y_j)|-(4n)^{n+1}\sigma
			-2C\sigma\sup_{H_1}(|f^+|+|f^-|)\quad\textrm{by }\eqref{eq:2 eps estimate}.
		\end{align*}
	Similarly, we also have 
		\begin{align*}
		&\vint{A_{j,2}\cap Q_j}|f-f_{Q_j}|\,dy\ge \frac 12 |f^+(y_j)-f^-(y_j)|-(4n)^{n+1}\sigma
		-2C\sigma\sup_{H_1}(|f^+|+|f^-|).
	\end{align*}
		Thus from \eqref{eq:initial estimate}, we get
		\begin{equation}\label{eq:cube estimate}
			\vint{Q_j}|f-f_{Q_j}|\,dy
			\ge \frac{1-2C\sigma}{2}|f^+(y_j)-f^-(y_j)|-(4n)^{n+1}\sigma
			-2C\sigma\sup_{H_1}(|f^+|+|f^-|).
		\end{equation}
		Note that $\{Q_j\}_{j=1}^N \cup \{Q\in \mathcal F_{\eps_i}\colon Q\setminus Q(x,r)\neq \emptyset\}$
		is a collection of disjoint $\eps_i$-cubes in $\Om$. Thus
		\begin{equation}\label{eq:replacing cubes}
			\begin{split}
				&\eps_{i}^{n-1}\sum_{Q\in\mathcal F_{\eps_i}\colon Q\setminus Q(x,r)= \emptyset}\,\vint{Q}|f_{\eps_i}-(f_{\eps_i})_{Q}|\,dy
				+\eps_{i}^{n-1}\sum_{Q\in\mathcal F_{\eps_i}\colon Q\setminus Q(x,r)\neq \emptyset}\,\vint{Q}|f_{\eps_i}-(f_{\eps_i})_{Q}|\,dy\\
				&=\eps_{i}^{n-1}\sum_{Q\in\mathcal F_{\eps_i}}\,\vint{Q}|f_{\eps_i}-(f_{\eps_i})_{Q}|\,dy\\
				&\ge \eps_{i}^{n-1}\sum_{j=1}^M\,\vint{Q_j}|f_{\eps_i}-(f_{\eps_i})_{Q}|\,dy
				+\eps_{i}^{n-1}\sum_{Q\in\mathcal F_{\eps_i}\colon Q\setminus Q(x,r)\neq \emptyset}\,\vint{Q}|f_{\eps_i}-(f_{\eps_i})_{Q}|\,dy
				-\frac{1}{i}
			\end{split}
		\end{equation}
		by \eqref{eq:almost optimality}.
		Then
		\begin{equation}\label{eq:inside Q cubes estimate}
		\begin{split}
			&\eps_{i}^{n-1}\sum_{Q\in\mathcal F_{\eps_i}\colon Q\setminus Q(x,r)= \emptyset}\,\vint{Q}|f_{\eps_i}-(f_{\eps_i})_{Q}|\,dy+\frac{1}{i}\\
			&\qquad\ge \eps_{i}^{n-1}\sum_{j=1}^M\,\vint{Q_j}|f_{\eps_i}-(f_{\eps_i})_{Q_j}|\,dy
			\quad\textrm{by }\eqref{eq:replacing cubes}\\
			&\qquad\ge \eps_i^{n-1}\sum_{j=1}^M\,\vint{Q_j}|f-f_{Q_j}|\,dy
			-2\eps_i^{n-1}M\Vert f-f_{\eps_i}\Vert_{L^{\infty}(Q(x,2r))}\\
			&\qquad\ge \eps_i^{n-1}\sum_{j=1}^M\left(\frac {1-2C\sigma}{2}|f^+(y_j)-f^-(y_j)|-(4n)^{n+1}\sigma
			-2C\sigma\sup_{H_1}(|f^+|+|f^-|)\right)\\
			&\qquad\qquad -2\eps_i^{n-1}M\Vert f-f_{\eps_i}\Vert_{L^{\infty}(Q(x,2r))}
			\quad\textrm{by }\eqref{eq:cube estimate}.
		\end{split}
	\end{equation}
		Note that $\eps_i^{n-1}M \le r^{n-1}$.
		Using \eqref{eq:f continuity}, we moreover get
		\begin{align*}
			&\eps_i^{n-1}\sum_{j=1}^M|f^+(y_j)-f^-(y_j)|\\
			&\qquad \ge |f^+(x)-f^-(x)|\left|\pi_1(H_1)\cap Q^{n-1}(\pi_1(x),r-4\sqrt n\eps_i)\right|
			-2\sigma M\eps_i^{n-1}\\
			&\qquad \ge |f^+(x)-f^-(x)|(1-\sigma)r^{n-1}
			-2\sigma r^{n-1}\quad\textrm{by }\eqref{eq:1 minus delta estimate}\\
			&\qquad \ge (1-\sigma)^2\int_{Q(x,r)\cap H_1}|f^+(x)-f^-(x)|\,d\mathcal H^{n-1}
			-2\sigma r^{n-1}\quad\textrm{since }\Vert \nabla h_1\Vert_{L^{\infty}}\le \sigma\\
			&\qquad \ge (1-\sigma)^2\int_{Q(x,r)\cap H}|f^+-f^-|\,d\mathcal H^{n-1}
			-4\sigma r^{n-1}\quad\textrm{by }\eqref{eq:f continuity}.
		\end{align*}
		Combining this with \eqref{eq:inside Q cubes estimate} and using again
		$\eps_i^{n-1}M \le r^{n-1}$, we have
		\begin{align*}
			&\eps_{i}^{n-1}\sum_{Q\in\mathcal F_{\eps_i}\colon Q\setminus Q(x,r)= \emptyset}\,\vint{Q}|f_{\eps_i}-(f_{\eps_i})_{Q}|\,dy\\
			&\quad \ge \frac {(1-2C\sigma)^3}{2}\int_{Q(x,r)\cap H}|f^+-f^-|\,d\mathcal H^{n-1}
			-(4n)^{n+2}\sigma r^{n-1}\\
			&\qquad-2C\sigma r^{n-1}\sup_{H_1}(|f^+|+|f^-|)
			-2r^{n-1}\Vert f-f_{\eps_i}\Vert_{L^{\infty}(Q(x,2r))}-\frac{1}{i}.
		\end{align*}
		Letting $i\to\infty$,  we get
		\begin{align*}
			&\liminf_{i\to\infty}\eps_{i}^{n-1}\sum_{Q\in\mathcal F_{\eps_i}\colon Q\setminus Q(x,r)= \emptyset}\,\vint{Q}|f_{\eps_i}-(f_{\eps_i})_{Q}|\,dy\\
			& \ge \frac {(1-2C\sigma)^3}{2}\int_{Q(x,r)\cap H}|f^+-f^-|\,d\mathcal H^{n-1}
			-(4n)^{n+2}\sigma r^{n-1}
			-2C\sigma r^{n-1}\sup_{H_1}(|f^+|+|f^-|).
		\end{align*}
		Since $\mu(\partial Q(x,r))=0$, by the weak* convergence 	
		$\mu_{\eps_i}\overset{*}{\rightharpoondown}\mu$ we get
		\begin{align*}
			&\mu(Q(x,r))\\
			&\ \  =\lim_{i\to\infty}\mu_{\eps_i}(Q(x,r))\\
			&\ \  \ge \liminf_{i\to\infty}\eps_{i}^{n-1}\sum_{Q\in\mathcal F_{\eps_i}\colon Q\setminus Q(x,r)= \emptyset}\,\vint{Q}|f_{\eps_i}-(f_{\eps_i})_{Q}|\,dy
			\quad\textrm{by definition of }\mu_{\eps_i}\\
			&\ \  \ge \frac {(1-2C\sigma)^3}{2}\int_{Q(x,r)\cap H}|f^+-f^-|\,d\mathcal H^{n-1}
			-(4n)^{n+2}\sigma r^{n-1}
			-2C\sigma r^{n-1}\sup_{H_1}(|f^+|+|f^-|).
		\end{align*}
		Recall the representation \eqref{eq:jump part representation}.
		Using \eqref{eq:diff wrt cubes}, this proves that
		\begin{align*}
			\frac{d\mu}{d|D^jf|}(x)
			&= \liminf_{r\to 0}\frac{\mu(Q(x,r))}{|D^jf|(Q(x,r))}\\
			& =\liminf_{r\to 0}\frac{\mu(Q(x,r))}{\int_{Q(x,r)\cap H}|f^+-f^-|\,d\mathcal H^{n-1}}\\
			&\ge \frac {(1-2C\sigma)^3}{2}-\frac{(4n)^{n+2}\sigma+
				2C\sigma \sup_{H_1}(|f^+|+|f^-|)}{|f^+(x)-f^-(x)|}
		\end{align*}
		for $|D^jf|$-a.e. $x\in H_1$,
		and so in total for $|D^jf|$-a.e. $x\in H$.
		Since we have $|Df|(J_f\setminus H)<\sigma$ with $\sigma>0$
		arbitrarily small, letting $\sigma\to 0$ we get
		\[
		\frac{d\mu}{d|D^jf|}(x)
		\ge \frac 12\quad\textrm{for }|D^jf|\textrm{-a.e. }x\in \Om.
		\]
		By \eqref{eq:Radon Nikodym}, we get
		$\mu\ge \frac{1}{2}|D^jf|$ in $\Om$.
		By lower semicontinuity with respect to weak* convergence, we now get
		\begin{align*}
			\lim_{i\to\infty}\kappa_{\eps_i}(f_{\eps_i},\Om)
			= \liminf_{i\to\infty}\mu_{\eps_i}(\Om)
			\ge \mu(\Om)
			\ge \tfrac{1}{2} |D^jf|(\Om),
		\end{align*}
		proving \eqref{eq:lower bound eps i}.
	\end{proof}
	
	\begin{remark}
		Note that as a corollary, we can consider the case $f_{\eps}\equiv f$,
		giving one part of the lower bound of \eqref{eq:SBV limit}.
		In our proof, the measures $\mu_{\eps_j}$ are defined as in \cite{ARBDN},
		whereas our method of defining the set $H$ is inspired by \cite{FMS16,FMS18}.
	\end{remark}
	
	\begin{proof}[Proof of the lower bound of \eqref{eq:Gamma infty limit}]
		Let $f\in L^1_{\loc}(\Om)$ and take a family $\{f_{\eps}\}_{\eps>0}$ such that
		$f_\eps-f\to 0$ in $L^{\infty}_{\loc}(\Om)$.
		We can assume that $\liminf_{\eps\to 0}\kappa_\eps(f_\eps,\Om)<\infty$.
		Then by Theorem \ref{thm:lower bound abs cont}, we have $\Var(f,\Om)<\infty$.
		
		Fix $\delta>0$.
		We find a compact set $K\subset J_f$ such that $|Df|(J_f\setminus K)<\delta$.
		We also find an open set $W$ with $K\subset \overline{W}\subset \Om$ and $|Df|(\overline{W}\setminus K)<\delta$.
		Then we have
		\begin{align*}
			\liminf_{\eps\to 0}\kappa_{\eps}(f_{\eps},\Om)
			&\ge \liminf_{\eps\to 0}\kappa_{\eps}(f_{\eps},W)
			+\liminf_{\eps\to 0}\kappa_{\eps}(f_{\eps},\Om\setminus \overline{W})\\
			&\ge \tfrac 12 |D^j f|(W)+ \tfrac 14 |D f|(\Om\setminus \overline{W})\quad \textrm{by Prop. }\ref{prop:lower bound jump}\textrm{ and Thm. }\ref{thm:lower bound abs cont}\\
			&\ge \tfrac 12 |D^j f|(J_f)-\delta+ \tfrac 14 |D f|(\Om\setminus J_f)-\delta
			\\
			&=\tfrac 12 |D^j f|(\Om)+ \tfrac 14 |D^a f|(\Om)+\tfrac 14 |D^c f|(\Om)-2\delta.
		\end{align*}
		Since $\delta>0$ was arbitrary, we have the result.
	\end{proof}
	
	\section{Upper bounds}
	
	In this section we prove the upper bounds of Theorem \ref{thm:main}.
	As usual, $\Om$ denotes an arbitrary open subset of $\R^n$.
	
	The following approximation result for BV functions in the $L^p$-norm is straightforward to prove,
	see \cite[Section 3]{LLW}.
	
	\begin{theorem}\label{thm:BV Lp approximation result}
		Let $f\in L^1_{\loc}(\Om)$ with $\Var(f,\Om)<\infty$, and let $1\le p<\infty$.
		Then there exists a sequence $\{f_i\}_{i=1}^{\infty}\subset W^{1,1}_{\loc}(\Om)$ such that
		$f_i-f \to 0$ in $L^1(\Om)\cap L^p(\Om)$ and
		\[
		\int_{\Om}|\nabla f_i|\,dx\to |Df|(\Om).
		\]
	\end{theorem}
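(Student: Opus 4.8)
The plan is to use the classical strict approximation of a BV function by smooth functions --- a partition of unity together with mollification at a locally adapted scale, exactly as in the proof of \cite[Theorem 3.9]{AFP} --- and to observe that the very same choice of scales also makes the error small in the $L^p$-norm. Since we only require the approximants to lie in $W^{1,1}_{\loc}(\Om)$ rather than in $C^\infty(\Om)$, there is in fact room to spare.

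\emph{Set-up.} By assumption $|Df|(\Om)=\Var(f,\Om)<\infty$, so in particular $f\in\BV_{\loc}(\Om)$. Fix $i\in\N$. Choose an increasing sequence of bounded open sets $A_1\Subset A_2\Subset\cdots$ with $\bigcup_{m}A_m=\Om$ and, after discarding finitely many initial terms, with $|Df|(\Om\setminus A_1)<1/i$. Set $A_0:=\emptyset$ and $U_j:=A_{j+1}\setminus\overline{A_{j-1}}$ for $j\ge1$. Then $\{U_j\}_{j\ge1}$ is a locally finite open cover of $\Om$ in which every point of $A_1$ lies in $U_1$ only, every point of $\Om\setminus A_1$ lies in at most two of the $U_j$, and $U_j\subset\Om\setminus\overline{A_1}$ for $j\ge2$. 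Let $\{\varphi_j\}\subset C_c^\infty(\Om)$ be a partition of unity subordinate to $\{U_j\}$, so that $\varphi_j\in C_c^\infty(U_j)$, $0\le\varphi_j\le1$, $\sum_j\varphi_j\equiv1$ on $\Om$, and hence $\sum_j\nabla\varphi_j\equiv0$. For scales $\delta_j=\delta_j(i)>0$ to be fixed, put
\[
f_i:=\sum_{j\ge1}\rho_{\delta_j}*(\varphi_j f).
\]
If each $\delta_j$ is small enough that $\supp\!\big(\rho_{\delta_j}*(\varphi_j f)\big)$ is a compact subset of $U_j$, the sum is locally finite, so $f_i\in C^\infty(\Om)\subset W^{1,1}_{\loc}(\Om)$.

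\emph{The estimates.} Shrinking each $\delta_j$ further if necessary, we may also demand
\[
\big\|\rho_{\delta_j}*(\varphi_j f)-\varphi_j f\big\|_{L^1(\Om)}
+\big\|\rho_{\delta_j}*(\varphi_j f)-\varphi_j f\big\|_{L^p(\Om)}
+\big\|\rho_{\delta_j}*(f\nabla\varphi_j)-f\nabla\varphi_j\big\|_{L^1(\Om)}\le 2^{-j}/i .
\]
This is possible because each of $\varphi_j f$ and $f\nabla\varphi_j$ has compact support in $\Om$ and lies in the relevant $L^q$-spaces --- in particular $\varphi_j f\in L^p(\Om)$, which holds as soon as $f\in L^p_{\loc}(\Om)$, and for $1\le p\le n/(n-1)$ this is automatic for BV functions by the Sobolev--Poincar\'e inequality --- and because mollification converges in every $L^q$-norm in which the function already lies. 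From $f_i-f=\sum_j\big(\rho_{\delta_j}*(\varphi_j f)-\varphi_j f\big)$ and the triangle inequality we get $\|f_i-f\|_{L^1(\Om)}\le 1/i$ and $\|f_i-f\|_{L^p(\Om)}\le 1/i$, so $f_i-f\to0$ in $L^1(\Om)\cap L^p(\Om)$ as $i\to\infty$. For the gradient, using $D(\varphi_j f)=\varphi_j\,Df+f\,\nabla\varphi_j\,\mathcal L^n$ and $\sum_j f\,\nabla\varphi_j=f\sum_j\nabla\varphi_j=0$,
\[
\nabla f_i=\sum_j\rho_{\delta_j}*(\varphi_j\,Df)+\sum_j\Big(\rho_{\delta_j}*(f\,\nabla\varphi_j)-f\,\nabla\varphi_j\Big),
\]
and since $|\varphi_j\,Df|\le|Df|\mres U_j$ and the cover has the multiplicity described above,
\[
\int_{\Om}|\nabla f_i|\,d\mathcal L^n\le\sum_j|Df|(U_j)+\tfrac1i\le|Df|(\Om)+|Df|(\Om\setminus A_1)+\tfrac1i\le|Df|(\Om)+\tfrac2i .
\]
Hence $\limsup_{i\to\infty}\int_\Om|\nabla f_i|\,d\mathcal L^n\le|Df|(\Om)$. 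On the other hand $f_i\to f$ in $L^1_{\loc}(\Om)$, so lower semicontinuity of the variation gives $\liminf_{i\to\infty}\int_\Om|\nabla f_i|\,d\mathcal L^n\ge\Var(f,\Om)=|Df|(\Om)$, and therefore $\int_\Om|\nabla f_i|\,d\mathcal L^n\to|Df|(\Om)$.

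\emph{Main obstacle.} No individual estimate is hard; the care lies in the bookkeeping. One must arrange the exhaustion $\{A_m\}$ so that the (at most double) overlaps of $\{U_j\}$ carry less than $1/i$ of the mass $|Df|$, so that $\sum_j|Df|(U_j)$ does not overshoot $|Df|(\Om)$, and one must check that a single choice of $\delta_j$ controls all three error terms at once. The only feature beyond the classical statement is the $L^p$-term, but it is handled exactly like the $L^1$-term, since $\varphi_j f$ is a \emph{fixed}, compactly supported function to which standard mollification estimates apply in every $L^q$-norm it belongs to.
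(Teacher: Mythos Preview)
The paper does not actually prove this theorem; it merely calls it ``straightforward'' and refers to \cite[Section~3]{LLW}. Your argument is the classical strict-approximation construction (Anzellotti--Giaquinta, \cite[Theorem~3.9]{AFP}), augmented by the observation that the same mollification scales also control the $L^p$-error. The bookkeeping for the gradient estimate is correct.

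There is, however, a genuine gap in the $L^p$-part. You need $\varphi_j f\in L^p(\Om)$ in order to make $\|\rho_{\delta_j}*(\varphi_j f)-\varphi_j f\|_{L^p}$ small, and you yourself note that this holds only once $f\in L^p_{\loc}(\Om)$ --- automatic for $1\le p\le n/(n-1)$ by Sobolev embedding, but not in general for larger $p$ when $n\ge 2$. The theorem, though, is stated for \emph{all} $1\le p<\infty$, and that full range is exactly what is invoked in the proof of the upper bound of \eqref{eq:Gamma p limit}. For a concrete obstruction, take $n=2$, $\Om=B(0,1)$, $p=5$, and $f(x)=|x|^{-1/2}\,\mathbbm{1}_{\{x_1>0\}}(x)$: one checks that $\Var(f,\Om)<\infty$ but $f\notin L^5_{\loc}(\Om)$, and for any scale $\delta>0$ the difference $\rho_\delta*f-f$ is not even in $L^5$ near the origin (since $\rho_\delta*f$ is bounded by a constant times $\delta^{-1/2}$ there, while $f$ behaves like $|x|^{-1/2}$ on $\{x_1>0\}$). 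The same failure persists after multiplying by a cut-off $\varphi_j$ supported near the origin. Thus your construction, unchanged, does not produce the required sequence for such $f$ and such $p$; some further idea --- beyond plain mollification of the localized pieces $\varphi_j f$ --- is needed to cover the range $p>n/(n-1)$.
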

	
	\begin{proof}[Proof of the upper bound of \eqref{eq:Gamma p limit}]
		Fix an arbitrary $1\le p<\infty$.
		Let $f\in L^1_{\loc}(\Om)$. We can assume that $\Var(f,\Om)<\infty$.
		Applying Theorem \ref{thm:BV Lp approximation result}, we find a sequence of functions
		$\{f_i\}_{i=1}^{\infty}\subset W^{1,1}_{\loc}(\Om)$ such that
		$f_i-f \to 0$ in $L^p(\Om)$ and
		\[
		\int_{\Om}|\nabla f_i|\,dx\le |Df|(\Om)+1/i.
		\]
		For every $i\in\N$, by \eqref{eq:SBV limit}
		we find $0<\eps_i<1/i$ such that 
		\[
		\kappa_{\eps}(f_i,\Om)\le \frac 14 \int_{\Om}|\nabla f_i|\,dx+1/i
		\]
		for all $0<\eps\le \eps_i$.
		We can also assume that $\eps_{i+1}<\eps_i$.
		Define $f_{\eps}:=f_i$ for all $\eps\in (\eps_{i+1},\eps_i]$.
		Then for all $\eps\in (\eps_{i+1},\eps_i]$, we have
		\begin{align*}
			\kappa_{\eps}(f_\eps,\Om)
			&= \kappa_{\eps}(f_i,\Om) \\
			&\le \frac 14 \int_{\Om}|\nabla f_i|\,dx+1/i\\
			&\le \tfrac 14 |Df|(\Om)+2/i.
		\end{align*}
		Note that $i\to \infty$ as $\eps\to 0$, and so we get
		\[
		\limsup_{\eps\to 0}\kappa_{\eps}(f_\eps,\Om)\le \tfrac 14 |Df|(\Om).
		\]
	\end{proof}
	
	In the construction of the standard Cantor-Vitali function $f$ on $[0,1]$, note that the approximating
	Lipschitz functions converge \emph{uniformly} to $f$.
	To prove the upper bound for the $\Gamma_{\infty}$-convergence, we need the following
	approximation result which has a complicated proof given in \cite{L-appr}.
	
	\begin{theorem}\label{thm:approximation result}
		Let $f\in\BV(\Om)$. Then there exists a
		sequence $\{f_i\}_{i=1}^{\infty}\subset \SBV(\Om)$
		such that $f_i-f\to 0$ in $L^{\infty}(\Om)$,
		\[
		\limsup_{i\to \infty}|D^a f_i|(\Om)\le  |D^a f|(\Om)+|D^c f|(\Om),
		\quad\textrm{and}\quad
		\limsup_{i\to \infty}|D^j f_i|(\Om)\le  |D^j f|(\Om).
		\]
	\end{theorem}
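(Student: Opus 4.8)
The plan is to convert the Cantor part of $f$ into an absolutely continuous derivative of no larger total mass, while leaving the jump part essentially intact; the $L^{\infty}$-convergence forces us to preserve the jump set, which rules out the naive scheme $f\mapsto\rho_\delta*f$ (this not only smears the jumps of $f$ but charges all of $|D^jf|(\Om)$ to the absolutely continuous part). The model to keep in mind is the Cantor--Vitali function $c$ on $[0,1]$: its canonical piecewise-linear approximants $c_i$ are monotone, converge to $c$ uniformly, lie in $W^{1,1}$, and satisfy $\int_0^1|c_i'|\,dx=1=|D^cc|((0,1))$ and $|D^jc_i|=0$. We imitate this by a ``lamination'' of the superlevel sets of $f$ near the set on which $D^cf$ concentrates.

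The first step isolates the jump part by the same localization as in the proof of Proposition~\ref{prop:lower bound jump}: given $\delta>0$, one chooses a compact $K\subset J_f$ covered by finitely many $C^1$ graphs with $|Df|(J_f\setminus K)<\delta$, and an open $W$ with $K\subset\overline W\subset\Om$ and $|Df|(\overline W\setminus K)<\delta$. Inside a thin tube around $K$ one keeps the jump of $f$ across $K$ and only smooths the remainder, which has total variation $O(\delta)$ there; this contributes at most $|D^jf|(\Om)$ to $\limsup_i|D^jf_i|(\Om)$ and $O(\delta)$ to $\limsup_i|D^af_i|(\Om)$. After a diagonal argument in $\delta$, matters reduce to the case of a $\BV$ function $g$ with $|D^jg|(\Om)$ arbitrarily small, for which we must produce $\SBV$ approximants $g_i$ with $g_i-g\to0$ in $L^{\infty}(\Om)$, $\limsup_i|D^ag_i|(\Om)\le|D^ag|(\Om)+|D^cg|(\Om)$, and $|D^jg_i|(\Om)\to0$.

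For such a $g$ I would argue through the coarea formula~\eqref{eq:coarea}. With $E_t:=\{g>t\}$, the map $t\mapsto\mathbbm{1}_{E_t}$ is decreasing, $g=\int_0^\infty\mathbbm{1}_{E_t}\,dt-\int_{-\infty}^0\mathbbm{1}_{\Om\setminus E_t}\,dt$, and $\int_{\R}|D\mathbbm{1}_{E_t}|(\Om)\,dt=|D^ag|(\Om)+|D^cg|(\Om)+|D^jg|(\Om)$ with the last term small. One replaces $\{E_t\}$ by a decreasing family $\{E_t^i\}$ that agrees with $\{E_t\}$ outside a shrinking neighborhood of the concentration set of $D^cg$ and for $|t|$ large, arranged so that: (i) the level sets sweep out an $n$-dimensional collar continuously in $t$, which makes $g_i:=\int_0^\infty\mathbbm{1}_{E_t^i}\,dt-\int_{-\infty}^0\mathbbm{1}_{\Om\setminus E_t^i}\,dt$ locally Lipschitz, hence $g_i\in W^{1,1}_{\loc}(\Om)$ with $|D^sg_i|(\Om)=0$ and in particular $g_i\in\SBV(\Om)$ with $|D^jg_i|(\Om)=0$; (ii) $g_i-g\in L^{\infty}(\Om)$ and $\|g_i-g\|_{L^{\infty}(\Om)}\to0$; and (iii) $\int_{\R}|D\mathbbm{1}_{E_t^i}|(\Om)\,dt\le\int_{\R}|D\mathbbm{1}_{E_t}|(\Om)\,dt+o(1)$, using that $|D^cg|$ vanishes on $\mathcal H^{n-1}$-finite sets to bound the perimeter created by the lamination. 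Then $|D^ag_i|(\Om)=|Dg_i|(\Om)=\int_{\R}|D\mathbbm{1}_{E_t^i}|(\Om)\,dt\le|D^ag|(\Om)+|D^cg|(\Om)+o(1)$, as required; reinserting the modified jump tube and diagonalizing in $\delta$ completes the proof.

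The main obstacle is step (iii) when $n\ge2$. In one dimension it is trivial, since the lamination is just monotone piecewise-linear interpolation of $t\mapsto E_t$, which creates no extra variation. In higher dimensions the superlevel sets interact through their boundaries, so one must spread the Cantor mass of $g$ across a genuinely $n$-dimensional collar of its concentration set, quantitatively controlling the newly created perimeter via Besicovitch-type covering bookkeeping, and then glue the construction back to the jump tube without producing spurious jumps or Cantor part. This technical core is carried out in detail in \cite{L-appr}; here we only invoke the statement.
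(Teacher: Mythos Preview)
Your sketch and the paper's proof both ultimately defer the hard work to \cite{L-appr}, but the paper's route is shorter and avoids the geometric localization you propose. The paper simply invokes \cite[Proposition~5.3]{L-appr} (together with two facts extracted from its proof): for each $\delta>0$ one obtains $h\in\SBV(\Om)$ with $\|h-f\|_{L^\infty(\Om)}<\delta$, $|Dh|(\Om)<|Df|(\Om)+\delta$, $\mathcal H^{n-1}(J_h\setminus S)=0$, and $|D(h-f)|(S)<\delta$, where $S=\{x\in J_f: f^+(x)-f^-(x)\ge\delta'\}$ for a suitably small $\delta'$. From these, a three-line computation yields $|D^jh|(\Om)\le|D^jf|(\Om)+2\delta$ and $|D^ah|(\Om)\le|D^af|(\Om)+|D^cf|(\Om)+3\delta$, and one diagonalizes in $\delta$. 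The point is that the approximant from \cite{L-appr} already has its jump set contained in $J_f$ up to an $\mathcal H^{n-1}$-null set, so the split into jump and absolutely continuous pieces is purely algebraic; no tube around $K\subset J_f$ is needed.

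Your outline instead first carves out a jump tube and reduces to a function $g$ with small jump part, then sketches a coarea-based lamination of the superlevel sets of $g$. This is, morally, a description of what goes on inside \cite{L-appr}, and you acknowledge as much in your final paragraph. As written, though, the reduction step is not quite a proof: ``keep the jump of $f$ across $K$ and only smooth the remainder'' inside the tube presupposes a decomposition $f=(\text{pure jump across }K)+(\text{remainder})$ with the remainder in $\BV$ and with controlled variation, which is itself nontrivial (and is essentially what the jump-set containment $J_h\subset J_f$ in \cite{L-appr} encodes). Since you end by citing \cite{L-appr} anyway, the localization and lamination sketch are a detour; the paper's approach of applying the black box directly and then splitting $|Dh|$ via the containment $J_h\subset S$ is both simpler and complete.
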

	\begin{proof}
		Let $\delta>0$.
		From  \cite[Proposition 5.3]{L-appr} we get a function $h\in \SBV(\Om)$
		such that $\Vert h-f\Vert_{L^{\infty}(\Om)}<\delta$,
		$|Dh|(\Om)<|Df|(\Om)+\delta$, and $\mathcal H^{n-1}(J_h\setminus J_f)=0$.
		The proof of \cite[Proposition 5.3]{L-appr} additionally contains the following facts:
		for a set
		\[
		S:=\{x\in J_f\colon f^+(x)-f^-(x)\ge \delta'\},
		\]
		where we can choose $\delta'>0$
		arbitrarily small,
		we have $\mathcal H^{n-1}(J_h\setminus S)=0$ and $|D(h-f)|(S)<\delta$.
		Recall the BV theory described at the end of Section \ref{sec:definitions}.
		Choosing $\delta'$ sufficiently small, we also have $|D^jf|(\Om\setminus S)<\delta$, and so
		in total we get
		\[
		|D^j(h-f)|(\Om)=|D^j(h-f)|(J_f)
		=|D^j(h-f)|(S)+|D^j f|(J_f\setminus S)
		<2\delta.
		\]
		Thus  $|D^j h|(\Om)\le |D^j(h-f)|(\Om)+|D^j f|(\Om)\le |D^j f|(\Om)+2\delta$, and
		\begin{align*}
			|D^a h|(\Om)
			&=|D h|(\Om)-|D^j h|(\Om)\\
			&\le |Df|(\Om)+\delta-|D^j f|(\Om)+|D^j(h-f)|(\Om)\\
			&\le |Df|(\Om)+\delta-|D^j f|(\Om)+2\delta\\
			&=|D^a f|(\Om)+|D^c f|(\Om)+3\delta.
		\end{align*}
		Now, choosing $\delta=1/i$ and defining $f_i$ to be the corresponding $h$ for each $i\in\N$,
		we obtain the desired sequence.
	\end{proof}
	
	By using a rather simple partition of unity argument inside $\Om$,
	see \cite[Lemma 3.2]{L-appr} and its proof,
	we can get the following variant of the above theorem.
	
	\begin{corollary}\label{cor:approximation result}
		Let $f\in L^1_{\loc}(\Om)$
		with $\Var(f,\Om)<\infty$. Then there exists a
		sequence $\{f_i\}_{i=1}^{\infty}\subset \SBV_{\loc}(\Om)$
		such that $f_i-f\to 0$ in $L^{\infty}(\Om)$,
		\[
		\limsup_{i\to \infty}|D^a f_i|(\Om)\le  |D^a f|(\Om)+|D^c f|(\Om),
		\quad\textrm{and}\quad
		\limsup_{i\to \infty}|D^j f_i|(\Om)\le  |D^j f|(\Om).
		\]
	\end{corollary}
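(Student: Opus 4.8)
The plan is to run the partition-of-unity argument indicated in \cite[Lemma 3.2]{L-appr}: localize $f$ to relatively compact subsets of $\Om$, where Theorem \ref{thm:approximation result} applies, then reassemble the local approximations while keeping the overcounting produced by the overlaps of the partition of unity confined to a set of small $|Df|$-measure, and finish with a diagonal argument. We may assume $\Var(f,\Om)<\infty$, so that $|Df|$ is a finite Radon measure on $\Om$; otherwise the statement is vacuous.

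First I would fix $\delta>0$ and choose open sets $\emptyset=U_0\Subset U_1\Subset U_2\Subset\cdots$ exhausting $\Om$, with $|Df|(\Om\setminus U_j)\le\delta\,2^{-j-2}$ for every $j$. Put $V_j:=U_{j+1}\setminus\overline{U_{j-1}}$ for $j\ge1$. One checks that $\{V_j\}_{j\ge1}$ is a locally finite open cover of $\Om$, that every $x\in\Om$ lies in at most two of the $V_j$ and these are consecutive, and that $V_j\cap V_{j+1}\subset\Om\setminus U_j$, so $\sum_j|Df|(V_j\cap V_{j+1})\le\delta/4$. Let $\{\phi_j\}$ be a smooth partition of unity subordinate to $\{V_j\}$: $\phi_j\in C_c^\infty(V_j)$, $0\le\phi_j\le1$, $\sum_j\phi_j\equiv1$ on $\Om$. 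Since each $V_j\Subset\Om$ and $f\in L^1_{\loc}(\Om)$, we have $f|_{V_j}\in\BV(V_j)$, so Theorem \ref{thm:approximation result} gives $g_{j,i}\in\SBV(V_j)$ with $g_{j,i}-f\to0$ in $L^\infty(V_j)$, $\limsup_i|D^a g_{j,i}|(V_j)\le|D^a f|(V_j)+|D^c f|(V_j)$, and $\limsup_i|D^j g_{j,i}|(V_j)\le|D^j f|(V_j)$. For each $i$ I would then select $\tilde g_{j,i}$ far enough along the $j$-th sequence that $\|\tilde g_{j,i}-f\|_{L^\infty(V_j)}\le 2^{-j}/\bigl(i(1+\|\nabla\phi_j\|_{L^1(\Om)})\bigr)$, $|D^a\tilde g_{j,i}|(V_j)\le|D^a f|(V_j)+|D^c f|(V_j)+2^{-j}/i$, and $|D^j\tilde g_{j,i}|(V_j)\le|D^j f|(V_j)+2^{-j}/i$, and define $f_i:=\sum_j\phi_j\tilde g_{j,i}$.

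Then I would verify the three conclusions. For membership in $\SBV_{\loc}(\Om)$: the sum defining $f_i$ is locally finite, and each $\phi_j\tilde g_{j,i}$ (extended by zero) lies in $\SBV(\R^n)$, with $D^a(\phi_j\tilde g_{j,i})=(\phi_j\nabla\tilde g_{j,i}+\tilde g_{j,i}\nabla\phi_j)\,\mathcal L^n$ and $D^j(\phi_j\tilde g_{j,i})=\phi_j D^j\tilde g_{j,i}$, by the Leibniz rule in $BV$ together with the fact that a $BV$ function whose singular derivative is concentrated on an $\mathcal H^{n-1}$-$\sigma$-finite set has no Cantor part (see \cite{AFP}); hence $f_i\in\SBV_{\loc}(\Om)$, with $D^a f_i$ and $D^j f_i$ the sums of the respective parts. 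For the $L^\infty$-convergence: $|f_i-f|\le\sum_j\phi_j|\tilde g_{j,i}-f|\le1/i$ pointwise on $\Om$. For the derivative bounds: using $\sum_j\nabla\phi_j\equiv0$ on $\Om$ to rewrite $D^a f_i=\bigl(\sum_j\phi_j\nabla\tilde g_{j,i}+\sum_j(\tilde g_{j,i}-f)\nabla\phi_j\bigr)\mathcal L^n$, one gets
\[
|D^a f_i|(\Om)\le\sum_j|D^a\tilde g_{j,i}|(V_j)+\sum_j\|\tilde g_{j,i}-f\|_{L^\infty(V_j)}\|\nabla\phi_j\|_{L^1(\Om)}\le\sum_j\bigl(|D^a f|(V_j)+|D^c f|(V_j)\bigr)+2/i,
\]
and since each point of $\Om$ lies in at most two consecutive $V_j$, $\sum_j\bigl(|D^a f|(V_j)+|D^c f|(V_j)\bigr)\le|D^a f|(\Om)+|D^c f|(\Om)+\sum_j|Df|(V_j\cap V_{j+1})\le|D^a f|(\Om)+|D^c f|(\Om)+\delta/4$; similarly $|D^j f_i|(\Om)\le\sum_j|D^j\tilde g_{j,i}|(V_j)+1/i\le|D^j f|(\Om)+\delta/4+1/i$. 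Letting $i\to\infty$ yields $f_i-f\to0$ in $L^\infty(\Om)$, $\limsup_i|D^a f_i|(\Om)\le|D^a f|(\Om)+|D^c f|(\Om)+\delta$, and $\limsup_i|D^j f_i|(\Om)\le|D^j f|(\Om)+\delta$; applying this with $\delta=1/k$ and extracting one function from the $k$-th sequence far enough along, a standard diagonal argument produces a single sequence with the exact bounds of the statement.

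The step that genuinely needs an idea is the estimate for $\sum_j\bigl(|D^a f|(V_j)+|D^c f|(V_j)\bigr)$: with a carelessly chosen partition of unity this only gives $2\bigl(|D^a f|(\Om)+|D^c f|(\Om)\bigr)$, which is too lossy, and the reason for choosing the exhaustion so that the consecutive overlaps $V_j\cap V_{j+1}$ lie inside $\Om\setminus U_j$ — where $|Df|$ is as small as we please — is precisely to turn that factor $2$ into $1+o_\delta(1)$. Everything else — the Leibniz rule, the vanishing of a $BV$ function's Cantor part on $\mathcal H^{n-1}$-$\sigma$-finite sets, and the diagonalization — is routine BV bookkeeping.
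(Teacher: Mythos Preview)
Your proposal is correct and is exactly the approach the paper indicates: the paper gives no detailed proof of the corollary, merely pointing to ``a rather simple partition of unity argument inside $\Om$, see \cite[Lemma 3.2]{L-appr} and its proof,'' and what you have written is a careful execution of precisely that argument. Your identification of the key point---choosing the exhaustion so that the overlaps $V_j\cap V_{j+1}$ carry arbitrarily little $|Df|$-mass, thereby upgrading the naive factor $2$ in $\sum_j|Df|(V_j)$ to $1+o_\delta(1)$---is the substantive content of the cited lemma.
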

	
	\begin{proof}[Proof of the upper bound of \eqref{eq:Gamma infty limit}]
		Let $f\in L^1_{\loc}(\Om)$. We can assume that $\Var(f,\Om)<\infty$.
		Using Corollary \ref{cor:approximation result}, choose a sequence of functions
		sequence $\{f_i\}_{i=1}^{\infty}\subset \SBV_{\loc}(\Om)$
		such that $f_i-f\to 0$ in $L^{\infty}(\Om)$,
		\[
		|D^a f_i|(\Om)\le  |D^a f|(\Om)+|D^c f|(\Om)+1/i,
		\quad\textrm{and}\quad
		|D^j f_i|(\Om)\le  |D^j f|(\Om)+1/i.
		\]
	Thanks to \eqref{eq:SBV limit}, 	for every $i\in\N$
		we find $0<\eps_i<1/i$ such that 
		\[
		\kappa_{\eps}(f_i,\Om)\le \tfrac 14 |D^a f_i|(\Om)
		+\tfrac 12 |D^j f_i|(\Om)+1/i
		\]
		for all $0<\eps\le \eps_i$.
		We can also assume that $\eps_{i+1}<\eps_i$.
		Define $f_{\eps}:=f_i$ for all $\eps\in (\eps_{i+1},\eps_i]$.
		Then for all $\eps\in (\eps_{i+1},\eps_i]$, we have
		\begin{align*}
			\kappa_{\eps}(f_\eps,\Om)
			&= \kappa_{\eps}(f_i,\Om) \\
			&\le \tfrac 14 |D^a f_i|(\Om)+\tfrac 12 |D^j f_i|(\Om)+1/i\\
			&\le \tfrac 14|D^af|(\Om)+\tfrac 14|D^cf|(\Om)+\tfrac 12 |D^j f|(\Om)+2/i.
		\end{align*}
		Note that $i\to \infty$ as $\eps\to 0$, and so we get
		\[
		\limsup_{\eps\to 0}	\kappa_{\eps}(f_\eps,\Om)
		\le \tfrac 14|D^af|(\Om)+\tfrac 14|D^cf|(\Om)+\tfrac 12 |D^j f|(\Om).
		\]
	\end{proof}

\end{document}